\documentclass[12pt,a4paper,reqno]{amsart}

\usepackage{amsfonts}
\usepackage{amsmath}
\usepackage{amssymb}
\usepackage{amsthm}
\usepackage{amsxtra}
\usepackage{bbm}
\usepackage{braket}
\usepackage{comment}
\usepackage{extarrows}
\usepackage{graphicx}
\usepackage{latexsym}
\usepackage{mathrsfs}
\usepackage{yhmath}
\usepackage{nicematrix}
\usepackage{mathtools}

\setcounter{MaxMatrixCols}{20}
\usepackage{float}
\usepackage{booktabs}
\usepackage{verbatim}
\usepackage{xcolor}

\usepackage[pdfborder={0 0 0}]{hyperref} % hyperref must be loaded after natbib
\hypersetup{breaklinks, raiselinks}

\usepackage{bm}
\usepackage{tikz}
%\usetikzlibrary{patterns}
%\usetikzlibrary{decorations.pathreplacing, decorations.markings}
%\tikzset{->-/.style={decoration={markings,mark=at position #1 with {\arrow{>}}},postaction={decorate}}}
%
%\usetikzlibrary{arrows}
%\usetikzlibrary{external}
%\tikzexternalize

%%%%%%%%%%%%%%%%%%%%%%%%%%%
%\usepackage[initials,lite,alphabetic]{amsrefs} % amsrefs must be loaded after hyperref
\usepackage[initials,lite]{amsrefs} % amsrefs must be loaded after hyperref
%Remove MathReview number in BibTeX
\AtBeginDocument{%
    \def\MR#1{}
}
%%%%%%%%%%%%%%%%%%%%%%%%%%%

\usepackage{cleveref}

%\usepackage{refcheck}
%%https://tex.stackexchange.com/questions/87610/making-refcheck-work-with-cleveref
%% Infrastructure    
%\makeatletter
%\newcommand{\refcheckize}[1]{%
% \expandafter\let\csname @@\string#1\endcsname#1%
% \expandafter\DeclareRobustCommand\csname relax\string#1\endcsname[1]{%
%\csname @@\string#1\endcsname{##1}\wrtusdrf{##1}}%
% \expandafter\let\expandafter#1\csname relax\string#1\endcsname
%}
%\makeatother
%% Now we add the reference commands we want refcheck to be aware of
%\refcheckize{\cref}
%\refcheckize{\Cref} 

%\usepackage[color]{showkeys}

\usepackage{a4wide}
\usepackage{fullpage}

\theoremstyle{plain}
\newtheorem{Theorem}{Theorem}[section]
\newtheorem{Lemma}[Theorem]{Lemma}
\newtheorem{Corollary}[Theorem]{Corollary}
\newtheorem{Proposition}[Theorem]{Proposition}

\theoremstyle{definition}

\newtheorem{Assumptions and Discussion}[Theorem]{Assumptions and Discussion}

\newtheorem{Example}[Theorem]{Example}

\newtheorem{Observations}[Theorem]{Observations}
\newtheorem{Notation}[Theorem]{Notation}

\theoremstyle{remark}

\newtheorem*{acknowledgment*}{Acknowledgment}

\usepackage[shortlabels]{enumitem}
\SetEnumerateShortLabel{a}{\textup{(\alph*)}}
\SetEnumerateShortLabel{A}{\textup{(\Alph*)}}
\SetEnumerateShortLabel{1}{\textup{(\arabic*)}}
\SetEnumerateShortLabel{i}{\textup{(\roman*)}}
\SetEnumerateShortLabel{I}{\textup{(\Roman*)}}
%\setlist{leftmargin=*}

 % arithmetic degree

\def\lex{\operatorname{lex}}

 % affine rank

 % arithmetic degree

\def\ceil#1{\left\lceil #1 \right\rceil}

\def\deg{\operatorname{deg}}
 % deletion of a face in a simplicial complex

\def\dim{\operatorname{dim}}

 % \Dp_1(R)=\Set{p \in \Spec(R)\mid \depth(R_p)=1}

 % derived category

 % elimination degree

 % finitely generated

 % Hilbert function

 % local cohomology functor
 % height of the ideal

 %  initial degree

\def\ini{\operatorname{in}} % initial ideal/term

 % isolated sets
 % minor ideals

\def\ker{\operatorname{ker}}
\def\KK{{\mathbb K}}
 % Koszul complex

 % leading coefficient
\def\lex{{\operatorname{lex}}}

 % leading ideal
 % link of a complex
 % leading monomial

 % monomial grade of the monomial ideal I

 % The set of all monomials of the polynomial ring

 % Newton polytope
 % normal form, used for grobner basis

 % nerve complex

\def\onto{\twoheadrightarrow}

\def\PP{{\mathbb P}}

\def\RR{{\mathbb R}}

 % relation type

 % subdivision
 % quasi-symbolic degree
 % symbolic degree
 % symbolic defect

 % squarefree Borel ideal

 % seminormalization

 % do not use \span

 %stable generators

 % stable closure

\def\ZZ{{\mathbb Z}}

\newcommand\bda{{\bm a}}

\newcommand\bdH{{\bm H}}

\newcommand\bdn{{\bm n}}

\newcommand\bdT{{\bm T}}

\newcommand\bfM{\mathbf{M}}

\newcommand\bfX{\mathbf{X}}

\newcommand\calC{\mathcal{C}}

\newcommand\calF{\mathcal{F}}
\newcommand\calG{\mathcal{G}}

\newcommand\calS{\mathcal{S}}
\newcommand\calT{\mathcal{T}}

\newcommand\frakI{\mathfrak{I}}

\newcommand\frakL{\mathfrak{L}}

\newcommand\length{\operatorname{length}}

\def\r{\operatorname{r}}
\def\reg{\operatorname{reg}}

%\usepackage{floatrow}
%\usepackage{caption}
%\DeclareCaptionSubType[alph]{figure}

\begin{document}

\title{Fiber cones of rational normal scrolls are Cohen--Macaulay}%{Cohen--Macaulayness and Castelnuovo--Mumford regularity of the fiber cone of rational normal scrolls}

\author[Kuei-Nuan Lin, Yi-Huang Shen]{Kuei-Nuan Lin and Yi-Huang Shen}

%\thanks{\today}

\thanks{2020 {\em Mathematics Subject Classification}. 
    Primary 13C40, % Linkage, complete intersections and determinantal ideals
    13A30, %Associated graded rings of ideals (Rees ring, form ring), analytic spread and related topics 
    13P10; %Gröbner bases; other bases for ideals and modules (e.g., Janet and border bases) 
    %13F50; %Rings with straightening laws, Hodge algebras 
    Secondary %14N07, % Secant varieties, tensor rank, varieties of sums of powers
    14M12 %Determinantal varieties
}

\thanks{Keyword: Rational normal scroll, Rees algebra, Fiber cone, Regularity, Cohen--Macaulay, Singularity}

\address{Department of Mathematics, The Penn State University, 
%Greater Allegheny Campus, 
McKeesport, PA, 15132, USA}
\email{kul20@psu.edu}

\address{CAS Wu Wen-Tsun Key Laboratory of Mathematics, School of Mathematical Sciences, University of Science and Technology of China, Hefei, Anhui, 230026, P.R.~China}
\email{yhshen@ustc.edu.cn}

\begin{abstract}
    In this short paper, we show that the fiber cones of rational normal scrolls are Cohen--Macaulay. As an application, we compute their Castelnuovo--Mumford regularities and $\bm{a}$-invariants, as well as the reduction number of the defining ideals of the rational normal scrolls. %In particular, these fiber cones are Cohen--Macaulay normal domain, and have rational singularities in characteristic zero and are $F$-rational in positive characteristic. 
    We also characterize the Gorensteinness of the fiber cone.
\end{abstract}

\maketitle

\section{Introduction}
In this short note, we show that the fiber cones of rational normal scrolls are Cohen--Macaulay. Let $R$ be a standard graded polynomial ring over some field $\KK$ and $I$ be an ideal of $R$ minimally generated by some forms $f_1,\ldots, f_s$ of the same degree. Those forms define a rational map $\phi$ whose image is a variety $X$. The bi-homogeneous coordinate ring of the graph of $\phi$ is the Rees algebra of the ideal $I$, and the homogeneous coordinate ring of the image is the fiber cone (special fiber ring) of this ideal. Here, we are mostly interested in these two types of blow-up algebras when the ideal $I$ describes the rational normal scroll.  

Rational normal scrolls are typical determinantal varieties,  central in the study of algebraic varieties. The study of determinantal varieties has attracted earnest attentions of algebraic geometers and  commutative algebraists, partly due to the beautiful structures involved and the interesting applications to the applied mathematics and statistics; see, for instance, \cite{BV}, \cite{MR3836659}, \cite{arXiv:2005.02909} and \cite{arXiv:2003.14232}, to name but a few. 

The study of Cohen--Macaulay property of the Rees algebra and the related fiber cone is an active research area. Most of the work is focusing on giving certain assumptions on the ground ring and the ideal;
%. One can deduce the Cohen--Macaulayness of the Rees algebra or its fiber cone, 
see for example, \cite{MR1723128}, \cite{MR0563540}, \cite{MR1128652}, and \cite{MR2431031}. However, not much is known about the Cohen--Macaulayness of these blow-up algebras of the determinantal varieties. The first well-known case is probably the maximal minors of a generic matrix where the fiber cone is the Grassmannian variety. Hochster \cite{MR314833} in 1973 showed that the Grassmannian is Cohen--Macaulay. In 1983, Eisenbud and Huneke \cite{MR0696134} showed that the posets defining the Rees algebra and fiber cone are wonderful, and obtained the Cohen--Macaulayness of those algebras. Recently, the first author of this paper and her coauthors proved the Cohen--Macaulayness of Rees algebras and its fibers cones of closed determinantal facet ideals in \cite{ALL} and sparse determinantal ideals in \cite{CDFGLPS}. 

All the work mentioned above came from generic matrices and their specializations. Blow-up algebras of rational normal scrolls were first studied by Conca, Herzog and Valla \cite{Sagbi} in 1996. And they proved that the Rees algebra and the fiber cone of a balanced rational normal scroll are Cohen--Macaulay normal domains. The difficulty of determining the Cohen--Macaulayness of those blow-up algebras is partly due to the lack of necessary information regarding the defining ideals of those algebras. Fortunately, Sammartano solved this implicitization problem in full generality for the rational normal scrolls 
%gave a complete description of the defining equations of the blow-up algebras of the rational normal scrolls 
in \cite{MR4068250} recently. And this opened the door of understanding those algebras for us.  

We need to be more precise now. In the following, let $n_1,\dots,n_d$ be a sequence of positive integers and $c=\sum_{i=1}^d n_i$. For each $i=1,\dots,d$, let $\calC_i\subseteq \PP^{c+d-1}$ be a rational normal curve of degree $n_i$ with complementary linear spans and let $\varphi_{i}:\PP^1\to \calC_i$ be the corresponding isomorphism. Whence, the related \emph{rational normal scroll} is simply
\[
    \calS_{n_1,\dots,n_d}\coloneqq \bigcup_{p\in \PP^1}\overline{\{\varphi_1(p),\varphi_2(p),\dots,\varphi_d(p)\}}\subseteq \PP^{c+d-1}.
\]
It is well-known (\cite{MR1416564}) that $\calS_{n_1,\dots,n_d}$ is uniquely determined by the sequence $n_1,\dots,n_d$ up to projective equivalence. And in suitable coordinates, the ideal $I_{n_1,\dots,n_d}$ of $\calS_{n_1,\dots,n_d}$ is generated by the maximal minors of the matrix
\begin{equation} 
    \bfX  \coloneqq  
    \scalebox{0.9}{$
        \begin{pNiceArray}{cccc|cccc|c|cccc} % use c instead of C in higher versions of nicematrix!
            x_{1,0} & x_{1,1} & \cdots & x_{1,n_1-1} &
            x_{2,0} & x_{2,1} & \cdots & x_{2,n_2-1} & \cdots &
            x_{d,0} & x_{d,1} & \cdots & x_{d,n_d-1} \\
            x_{1,1} & x_{1,2} & \cdots & x_{1,n_1} &
            x_{2,1} & x_{2,2} & \cdots & x_{2,n_2} & \cdots &
            x_{d,1} & x_{d,2} & \cdots & x_{d,n_d}
        \end{pNiceArray}.$} 
        \label{eqn:matrix-X}
\end{equation} 
In the following, we will write  $R=\KK[x_{i,j}\mid 0\le j\le n_i-1]$ for the ground ring, and $I=I_{n_1,\dots,n_d}=I_{2}(\bfX)$ for this ideal. Recall that the \emph{Rees algebra} of $I$ is $\mathcal{R}(I):=\oplus_{i\geq0}I^{i}t^{t}\subseteq R[t]$, and the related \emph{fiber cone} will be $\mathcal{F}(I)=\mathcal{R}(I)\otimes_R \mathbb{K}\cong\mathbb{K}[I]\subseteq R$, where $t$ is a new variable. As mentioned above, Sammartano gave the implicit equations of the defining ideals of these blow-up algebras in \cite{MR4068250}. Meanwhile, in the \emph{balanced case}, i.e., when $|n_{i}-n_{j}|\le 1$ for all $i,j$, Conca, Herzog, and Valla showed in \cite{Sagbi} that both $\mathcal{R}(I)$ and $\mathcal{F}(I)$ are Cohen--Macaulay normal domains. 
% Furthermore, if $\Char(\KK)=0$,
% then $\mathcal{R}(I)$ and $\mathcal{F}(I)$ have rational singularities.
% And if $\Char(\KK)>0$, then $\mathcal{R}(I)$ and $\mathcal{F}(I)$ are $F$-rational. 

In this paper, we will \emph{drop} the balanced assumption. We can still obtain the Cohen--Macaulayness of the fiber cone
$\mathcal{F}(I)$ in \Cref{thm:CM-fiber-cone}. By this, we generalize the corresponding result of Conca, Herzog, and Valla. As a quick application, we compute the Castelnuovo--Mumford regularity and $\bm{a}$-invariant of $\calF(I)$ in \Cref{thm:reg-2}. And the reduction number of the ideal $I$ is provided as well. As the second application, we characterize the Gorensteinness of the fiber cone.
% The Cohen--Macaulayness just established paves the way, since we have to resort to the balanced case. %by \cite[Theorem 3.7]{MR3341465} and \cite[Proposition 7.43]{MR2508056}. 

% Our main results are summarized in the following.
% \begin{Theorem}
%     \begin{enumerate}[1]
%         \item  Let $I_{n_1,\dots,n_d}$ be the defining ideal of the rational normal scroll $\calS_{n_1,\dots,n_d}\subseteq \PP^{c+d-1}$ for $c=n_1+\cdots+n_d$. Then the fiber cone $\calF(I_{n_1,\dots,n_d})$ is Cohen--Macaulay.
%         \item Closed formulas of the Castelnuovo--Mumford regularity and the $\bda$-invariant of the fiber cone $\calF(I_{n_1,\dots,n_d})$ as well as the reduction number of $I_{n_1,\dots,n_d}$ are given.
        
%     \end{enumerate}
% \end{Theorem}

\section{Cohen--Macaulayness of the fiber cones of rational normal scrolls} 

%In this section, we show in \Cref{thm:CM-fiber-cone} 
This section is devoted to prove
that the fiber cone associated with the rational normal
scroll is Cohen--Macaulay. To achieve that, we associate the initial ideal of the defining
ideal of the fiber cone 
with its Stanley--Reisner complex $\Delta$.
We will show that the ideal of the Alexander 
dual of this complex has linear quotients. Consequently, the original simplicial complex is shellable and Cohen--Macaulay. 
Hence, the fiber cone is Cohen--Macaulay as well. 

To establish the expected linear-quotients property, we have to resort to the binary-tree description of the facets of the simplicial complex $\Delta$ in \cite{MR4068250}.  To be more accurate, instead of just using the lexicographic order on the minimal monomial generators of the Alexander dual ideal, we need to group these generators first using the leaves of the binary trees; see \Cref{leafOrder}. 
The subsequent proof is also quite involved, since we have to analyze the binary trees in more detail and pin down those linear generators of the successive colon ideals.

Recall that a rational normal scroll is uniquely determined by a sequence of positive integers $\bdn=(n_1,...,n_d)$ with $n_1\le \cdots \le n_d$ by \cite{MR1416564}.
%In the following, let $c\coloneqq n_1+n_2+\cdots+n_d$. In suitable coordinates, the defining ideal $I=I_{n_1,\dots,n_d}$ of the corresponding rational normal scroll $\calS_{n_1, \ldots, n_d}\subseteq \PP^{c+d-1}$ is generated by the maximal minors of the matrix $\bfX$ where
%\begin{align*}
%    \bfX & \coloneqq (\bfX_1,\bfX_2,\dots,\bfX_d) \notag\\
%    & = 
%    \scalebox{0.95}{$
%        \begin{pNiceArray}{cccc|cccc|c|cccc}
%            x_{1,0} & x_{1,1} & \cdots & x_{1,n_1-1} &
%            x_{2,0} & x_{2,1} & \cdots & x_{2,n_2-1} & \cdots &
%            x_{d,0} & x_{d,1} & \cdots & x_{d,n_d-1} \\
%            x_{1,1} & x_{1,2} & \cdots & x_{1,n_1} &
%            x_{2,1} & x_{2,2} & \cdots & x_{2,n_2} & \cdots &
%            x_{d,1} & x_{d,2} & \cdots & x_{d,n_d}
%        \end{pNiceArray}.$} 
%    % \label{matrix-X}
%\end{align*}
%And here is the main result of this section.
Using the terminology stated in the above introduction, we are ready to present the main result of this paper.

\begin{Theorem}
    \label{thm:CM-fiber-cone}
    Let $I_{n_1,\dots,n_d}$ be the defining ideal of the rational normal scroll $\calS_{n_1,\dots,n_d}\subseteq \PP^{c+d-1}$ for $c=n_1+\cdots+n_d$. Then the fiber cone $\calF(I_{n_1,\dots,n_d})$ is Cohen--Macaulay.
\end{Theorem}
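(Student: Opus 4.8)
The route is the one sketched just before the statement. \emph{Step 1 (reduce to a Stanley--Reisner ring).} By Sammartano's implicitization theorem \cite{MR4068250}, the defining ideal $J$ of the fiber cone $\calF(I_{\bdn})$ has, with respect to a suitable term order, a Gr\"obner basis whose initial ideal $\ini(J)$ is squarefree; let $\Delta=\Delta_{\bdn}$ be the simplicial complex with $\KK[\Delta]=S/\ini(J)$, where $S$ denotes the polynomial ring on the fiber-cone variables. Since passing to an initial ideal preserves the Hilbert function (hence Krull dimension) and can only decrease depth, it suffices to show that $\KK[\Delta]$ is Cohen--Macaulay: then $\depth\calF(I_{\bdn})\ge\depth\KK[\Delta]=\dim\KK[\Delta]=\dim\calF(I_{\bdn})$, which forces equality. \emph{Step 2 (pass to the Alexander dual).} By the Eagon--Reiner theorem, $\KK[\Delta]$ is Cohen--Macaulay if and only if the Stanley--Reisner ideal $I_{\Delta^{\vee}}$ of the Alexander dual complex has a linear resolution. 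The plan is to establish the stronger property that $I_{\Delta^{\vee}}$ has \emph{linear quotients}: there is an ordering $u_1,\dots,u_m$ of its minimal monomial generators --- one monomial $u_k=\prod_{v\notin F_k}x_v$ per facet $F_k$ of $\Delta$ --- for which each colon ideal $(u_1,\dots,u_{k-1}):u_k$ is generated by variables. The binary-tree description shows $\Delta$ is pure, so $I_{\Delta^{\vee}}$ is generated in a single degree and linear quotients does give a linear resolution; as a bonus it yields that $\Delta$ is shellable.

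\emph{Step 3 (the ordering via binary trees).} The facets of $\Delta$ are parametrized by the binary trees of \cite{MR4068250}. A plain lexicographic order on the generators $u_k$ is not enough, so I would first group the generators into blocks indexed by the leaf sets of the corresponding binary trees, order the blocks, and then order within each block; this is precisely the order prepared in \Cref{leafOrder}. \emph{Step 4 (the colon ideals are linear --- the crux).} Fixing $k$, one must verify that for every $j<k$ there is an index $p<k$ with $u_p:u_k$ equal to a single variable $x_\ell$ that also divides $u_j:u_k=\prod_{v\in F_k\setminus F_j}x_v$; concretely, $\ell$ should be a vertex in $F_k\setminus F_j$ such that a suitable local modification of the binary tree of $F_k$ near the leaf distinguishing it from the tree of $F_j$ produces a facet $F_p$ occurring earlier in the chosen order. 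Making this work requires a detailed analysis of how two binary trees diverge and an explicit identification of the linear generators of the successive colon ideals; this combinatorial bookkeeping --- rather than any single conceptual point --- is where essentially all the effort goes, and it is the main obstacle.

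\emph{Conclusion.} Once linear quotients are in hand, $I_{\Delta^{\vee}}$ has a linear resolution, the Eagon--Reiner theorem gives that $\KK[\Delta]$ is Cohen--Macaulay, and Step~1 transfers this to $\calF(I_{\bdn})$. This recovers the Cohen--Macaulayness proved by Conca, Herzog and Valla \cite{Sagbi} in the balanced case, and the explicit shelling obtained along the way is what will make the subsequent computation of $\reg\calF(I_{\bdn})$ and of the $\bm a$-invariant feasible.
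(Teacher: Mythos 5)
Your overall strategy is exactly the one the paper follows: pass to the squarefree initial ideal of the defining ideal $P$ of the fiber cone, invoke the Eagon--Reiner correspondence, and prove that the Alexander dual $(\ini(P))^{\vee}$ has linear quotients with respect to an order that first groups the facets of $\Delta$ by the leaf sets of their binary trees and then refines lexicographically. However, there are two gaps. First, the binary-tree parametrization of the facets of $\Delta$ (\Cref{lem-facets}, quoted from \cite{MR4068250}) is only valid when $c\ge d+4$; your argument silently assumes it holds in general. The paper treats the range $2<c<d+4$ separately: there the fiber cone is the homogeneous coordinate ring of the Grassmannian $\mathbb{G}(1,c-1)$, and its Cohen--Macaulayness is Hochster's theorem \cite{MR314833}. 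Without this case split your proof does not cover all scrolls.

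Second, and more seriously, your Step~4 is a description of what must be done rather than a proof that it can be done. The entire mathematical content of the paper's argument is the verification that, for each facet $F$, the colon ideal $\braket{\widehat{\bdT^{F'}}\mid F'\succ F}:\widehat{\bdT^{F}}$ is generated by variables. This requires (i) an explicit combinatorial description of which vertices $(b,i_{b,j})\in F$ give variables in the colon ideal (in terms of whether the node has a right sibling, two children, or is the leftmost leaf), (ii) exhibiting for each such vertex a concrete earlier facet $F'$ with $F\setminus F'$ equal to that single vertex (a local surgery on the binary tree), and (iii) a non-crossing-intervals argument ruling out minimal generators of degree $\ge 2$, which occupies the longest subsection of the paper's proof. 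You correctly identify this as ``the main obstacle,'' but identifying an obstacle is not the same as overcoming it; as written, the proposal establishes the framework but not the theorem.
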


We need some preparations before laying out its proof.
Let us start by recalling the indispensable constructions in \cite{MR4068250} for the fiber cone $\calF(I)$ where $I=I_{n_1,\dots,n_d}$. First of all, a matrix $\bfM=\bfM_{n_1,\dots,n_d}$ was introduced in \cite[Section 2]{MR4068250} as follows. One starts with the first column of the $i$-th catalecticant block $\bfX_i$ of the original matrix $\bfX=(\bfX_1,\bfX_2,\dots,\bfX_d)$ for each $i$ increasingly in $i$, then the second column, and so on until one has used all columns except the last one for each block; when a block runs out of columns one simply skips it. The first $c-d$ columns of $\bfM$ form the submatrix 
\[
    \begin{pmatrix}
        x_{i(2),0} & x_{i(2)+1,0} & \cdots & x_{d,0} & x_{i(3),1} & x_{i(3)+1,1} & \cdots & x_{d,1} & x_{i(4),2} & \cdots & \cdots & x_{d,n_{d}-2}\\
        x_{i(2),1} & x_{i(2)+1,1} & \cdots & x_{d,0} & x_{i(3),2} & x_{i(3)+1,2} & \cdots & x_{d,1} & x_{i(4),3} & \cdots & \cdots & x_{d,n_{d}-1}
    \end{pmatrix},
\]
where the $i(l)$ denotes the least integer $i$ such that $n_i\ge l$.
The last $d$ columns of $\bfM$ are of the last column of the $i$-th block of $\bfX$ for each $i$, but ordered decreasingly in $i$:
\[
    \begin{pmatrix}
        x_{d,n_{d}-1} & x_{d-1,n_{d-1}-1} & \cdots & x_{2,n_{2}-1} & x_{1,n_{1}-1}\\
        x_{d,n_{d}} & x_{d-1,n_{d}-1} & \cdots & x_{2,n_{2}} & x_{1,n_{1}}
    \end{pmatrix}.
\]
Interested readers are invited to go through the examples in \cite[Example 2.1]{MR4068250}.

Let 
\[
    \pi: \KK[T_{\alpha,\beta}\mid 1\le \alpha<\beta \le c] \onto \calF(I)
\]
be the ring epimorphism determined by $T_{\alpha,\beta}\mapsto \det(\bfM_{\alpha,\beta})$, where $\bfM_{\alpha,\beta}$ is the $2\times 2$ submatrix of $\bfM$ using the $\alpha$-th and $\beta$-th columns. Then $P\coloneqq \ker(\pi)$ is called \emph{the defining ideal} of the fiber cone $\calF(I)$. Under suitable monomial ordering introduced in \cite{MR4068250}, the initial ideal $\ini(P)$ is squarefree and quadratic. One can then consider its associated Stanley--Reisner complex $\Delta$, which will be called the \emph{initial complex} of the fiber cone $\calF(I)$ following \cite{MR4068250}.

\begin{proof}
    [Proof of \Cref{thm:CM-fiber-cone}]
    When $c<d+4$, the fiber cone is the coordinate ring of the Grassmann variety $\mathbb{G}(1,c - 1) \subset P^{\binom{c}{2}-1}$; see \cite[Remark 3.15]{MR4068250}. Thus, the fiber cone is Cohen--Macaulay by \cite{MR314833}.
    
    Now, it remains to consider the case when $c\ge d+4$. Notice that $\calF(I)$ being Cohen--Macaulay is equivalent to saying that the defining ideal $P$ has this property. By \cite[Corollary 3.3.5]{MR2724673}, or more strongly, by \cite[Corollary 2.7]{arXiv1805.11923}, it suffices to show that $\ini(P)$ is Cohen--Macaulay. But this in turn is equivalent to showing that $\ini(P)^{\vee}$, the Alexander dual, has a linear resolution by \cite[Theorem 8.1.9]{MR2724673}. Because of this, we will show in \Cref{prop:linear-quotients} that $\ini(P)^{\vee}$ has linear quotients. This is sufficient for our purpose by \cite[Proposition 8.2.1]{MR2724673}. 
\end{proof}

In the following, we shall assume that $c\ge d+4$.
Before we really start proving the linear quotients property, we still need some preparations. Under the natural identification $T_{\alpha,\beta}\leftrightarrow (\alpha,\beta)$, the vertex set of the aforementioned simplicial complex $\Delta$ is
\[
    V = \Set{ (\alpha, \beta) | 1\le \alpha < \beta \le c}.
\]
Following \cite{MR4068250}, the vertices of $\Delta$ will also be described as open intervals in the real line $\RR$ with integral endpoints. And we will use the familiar notions of length, intersection, and containment of intervals.  

As usual, if $n$ is a positive integer, $[n]$ will be the set $\{1,2,\dots,n\}$. Then, with respect to the above matrix $\bfM$, for each $\alpha\in [c-d-2]$ and each $i\in [d]$, let $\gamma_{\alpha,i}$  be the least index $\gamma \geq \alpha+2$ such that the $\gamma$-th column of $\bfM$ involves variables from the block $\bfX_i$. Then, there exists some $\ell_{\alpha}$ with $2\le \ell_{\alpha}\le d+1$ such that
\begin{equation}
    \Set{\gamma_{\alpha, 1}, \dots, \gamma_{\alpha, d}} = \Set{ \alpha+2, \dots, \alpha +\ell_\alpha}\cup\Set{c-d+\ell_\alpha, \dots, c};
    \label{eqn:def-ell-alpha}
\end{equation}
c.f.~\cite[Definition 3.6]{MR4068250}. The key observation that we shall apply is the following result.

\begin{Lemma}
    [{\cite[Proposition 3.8]{MR4068250}}]
    \label{lem-facets}
    Suppose that $c\ge d+4$.
    A subset  $F \subseteq V$ is a facet of $\Delta$ if and only if the Hasse diagram $\calT_F$ of the poset $(F,\subseteq)$ satisfies the following conditions:
    \begin{enumerate}[i]
        \item $\calT_F$ is a rooted binary tree with the root $(1,c)$;

        \item there exists some $\alpha \in [c-d-2]$ such that the leaves of  $\calT_F$ are the intervals
            \begin{equation}
                \Set{ (\beta, \beta+1)  |  \beta \in \{\alpha, \ldots, \alpha + \ell_{\alpha}\} \cup \{c-d+\ell_{\alpha}-1, \ldots, c-1\}};
                \label{leaves-alpha}
            \end{equation}

        \item \label{lem-facets-iii} if  $\frakI\in F$ is a node of  $\calT_F$ with only one child $\frakI_1$, then $\length(\frakI) = \length(\frakI_1) +1$ and the unique unitary interval in $\frakI\setminus \frakI_1$ does not belong to $F$;

        \item \label{lem-facets-iv} if  $\frakI\in F$ is a node of  $\calT_F$ with two children  $\frakI_1,  \frakI_2$, then $\frakI_1\cap  \frakI_2= \varnothing$ and $ \length(\frakI_1) +\length(\frakI_2)=\length(\frakI) $.
    \end{enumerate}
    In this case we have  $|F| = c+d$.
\end{Lemma}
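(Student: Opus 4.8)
To prove \Cref{lem-facets}, the strategy rests on the fact that $\Delta$ is a flag complex: a subset $F\subseteq V$ is a face precisely when no two of its vertices form a non-edge, and the non-edges are exactly the squarefree quadratic minimal generators of $\ini(P)$. The first and most technical step is therefore to pin down $\ini(P)$ explicitly. I would exhibit a finite set $\calG$ of quadratic binomials lying in $P$ --- the Pl\"ucker-type relations among the $2\times 2$ minors of $\bfM$, together with the extra relations forced by the repetition of variables between the first $c-d$ columns of $\bfM$ and its last $d$ columns --- and then verify, via Buchberger's criterion, that $\calG$ is a Gr\"obner basis for the term order of \cite{MR4068250} by reducing every S-pair to zero. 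Reading off the leading terms then identifies the non-edges of $\Delta$ as exactly those pairs $\{(\alpha,\beta),(\gamma,\delta)\}$ whose open intervals overlap in a forbidden way: the crossing intervals, together with the further forbidden pairs produced by the variable repetitions, which are precisely the ones governed by the index data in \eqref{eqn:def-ell-alpha}.

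Granting that description, a face of $\Delta$ is a family of open intervals with integer endpoints in $[1,c]$, no two of which overlap forbiddenly; in particular no two cross, so the family is laminar and the poset $(F,\subseteq)$ has a forest-shaped Hasse diagram $\calT_F$. The remaining steps would be: (a) show that $(1,c)$ is adjacent in $\Delta$ to every other vertex, hence lies in every facet as its top element, which gives (i); (b) analyse, for a node $\frakI\in F$, which intervals can sit immediately below it --- this is dictated by which columns of $\bfM$ come from which catalecticant block $\bfX_i$, i.e.\ by the indices $i(l)$ and $\gamma_{\alpha,i}$ --- and conclude that every node has at most two children, together with the length relations in (iii) and (iv); (c) propagate this decomposition from the root downward, so that a chain of one-child descents (each lowering the length by one) interspersed with two-child splits must terminate in unit intervals, and a short induction forces the bottom unit intervals to be exactly those in \eqref{leaves-alpha} for a unique $\alpha\in[c-d-2]$, the index whose $\ell_\alpha$ in \eqref{eqn:def-ell-alpha} records how the catalecticant blocks interleave; this gives (ii). That settles the ``only if'' direction. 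For ``if'', I would check that any $F$ whose Hasse diagram satisfies (i)--(iv) contains no non-edge --- laminarity is automatic inside a tree, and the leaf condition rules out the extra forbidden pattern --- and is maximal, since any further interval $\frakJ$ would either cross an interval of $F$ or be wedged strictly between a node and its child, contradicting (iii) or (iv).

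The last clause $|F|=c+d$ is then a count on the binary tree. By \eqref{leaves-alpha} there are $d+2$ leaves, hence $d+1$ nodes with two children; and since $(1,c)$ has length $c-1$, every one-child node drops the length by exactly one, every two-child node splits the length, and every leaf has length one, so the number of one-child nodes equals $(c-1)-(d+2)=c-d-3$. Summing, $|F| = (d+2)+(d+1)+(c-d-3)=c+d$.

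The hard part is the first step: establishing the Gr\"obner-basis property and extracting from it the precise combinatorics of the non-edges. That is where the peculiarities of $\bfM$ --- skipping exhausted blocks, reversing the order of the last $d$ columns, and the resulting variable repetitions --- have to be tracked with care; once the non-edge set is understood, the rest is a controlled, if intricate, study of laminar families and the binary trees they produce.
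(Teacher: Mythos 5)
This statement is not proved in the paper at all: it is quoted verbatim from Sammartano's work (\cite[Proposition 3.8]{MR4068250}), so there is no in-paper argument to compare against. Your outline is a plausible reconstruction of the strategy actually used in that reference --- compute a squarefree quadratic Gr\"obner basis of the defining ideal $P$, read off $\Delta$ as a flag complex of open intervals, and classify the maximal laminar families as binary trees --- and your closing cardinality count ($d+2$ leaves, hence $d+1$ two-child nodes, and $(c-1)-(d+2)=c-d-3$ one-child nodes, summing to $c+d$) is correct and complete. So in spirit you are on the same track as the source.

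However, as a proof the proposal has real gaps, and you acknowledge the largest one yourself: the entire content of the lemma lives in the step you defer, namely exhibiting the Gr\"obner basis and extracting the exact list of non-edges. Without that list, neither direction can be carried out: the ``only if'' direction cannot pin down why the leaves must be precisely the set $\frakL_\alpha$ of \eqref{leaves-alpha} for a single $\alpha\in[c-d-2]$ (this is exactly where the block structure of $\bfM$, via $\gamma_{\alpha,i}$ and $\ell_\alpha$, enters, and it is the part that distinguishes this complex from the generic non-crossing complex). More concretely, your maximality argument in the ``if'' direction is incomplete as stated: a unit interval $(\gamma,\gamma+1)$ with $\gamma$ outside the index set of \eqref{leaves-alpha} crosses nothing and is not ``wedged'' in any way that contradicts (iii) or (iv) of a \emph{face}; to exclude it you must know that it forms a non-edge of the nested (non-crossing) type with the smallest interval of $F$ containing it, which again requires the explicit non-edge description you have not derived. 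So the proposal is a sound plan whose decisive steps remain unexecuted; for the purposes of this paper the correct move is simply to cite \cite[Proposition 3.8]{MR4068250}, as the authors do.
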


In the following, when we say the node $(\alpha',\beta')$ is a \emph{left child} of the node $(\alpha'',\beta'')$ in the binary tree $\calT_F$, we mean it is a child and $\alpha'=\alpha''$. The notions of \emph{right child}, \emph{left sibling} and \emph{right sibling} can be similarly defined.

We will call the set in \eqref{leaves-alpha} as the \emph{leaves set} of $F$ and denote it by $\frakL_{\alpha}$.

Notice that it is uniquely determined by its smallest point $\alpha\in[c-d-2]$ for the given matrix $\bfM$. The following easy fact will be needed later.

\begin{Lemma}
    \label{leaves-set-diff-1}
    For each $\alpha \in [c-d-3]$, the cardinalities $|\frakL_{\alpha}\setminus \frakL_{\alpha+1}|=|\frakL_{\alpha+1}\setminus \frakL_{\alpha}|=1$.
\end{Lemma}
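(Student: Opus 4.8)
The plan is to reduce the statement to elementary interval combinatorics. Via the bijection $\beta\leftrightarrow(\beta,\beta+1)$ it suffices to prove the claim for the integer sets $A_\alpha\coloneqq\{\alpha,\dots,\alpha+\ell_\alpha\}\cup\{c-d+\ell_\alpha-1,\dots,c-1\}$ underlying the leaves sets $\frakL_\alpha$. First I would note that, since $\alpha\le c-d-3$, the two intervals defining $A_\alpha$ (and likewise those defining $A_{\alpha+1}$) are disjoint, so $|A_\alpha|=|A_{\alpha+1}|=d+2$. Consequently $|A_\alpha\setminus A_{\alpha+1}|=|A_{\alpha+1}\setminus A_\alpha|$ automatically, and it is enough to identify this common value as $1$ by writing down the single element of each difference.

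The crux is to relate $\ell_{\alpha+1}$ to $\ell_\alpha$; I claim $\ell_{\alpha+1}\in\{\ell_\alpha-1,\ell_\alpha\}$. Set $G_\alpha\coloneqq\{\gamma_{\alpha,1},\dots,\gamma_{\alpha,d}\}$, which by \eqref{eqn:def-ell-alpha} equals $\{\alpha+2,\dots,\alpha+\ell_\alpha\}\cup\{c-d+\ell_\alpha,\dots,c\}$; these are $d$ distinct integers, since distinct columns of $\bfM$ involve variables from distinct catalecticant blocks. Because $\alpha+2\le c-d-1$, the $(\alpha+2)$-th column of $\bfM$ lies in its first part and hence involves variables from exactly one block $\bfX_{i_0}$; therefore $\gamma_{\alpha,i_0}=\alpha+2$ while $\gamma_{\alpha,i}\ge\alpha+3$ for all $i\ne i_0$. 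Passing to $\alpha+1$ changes only the $i_0$-th entry, so $G_{\alpha+1}=(G_\alpha\setminus\{\alpha+2\})\cup\{g\}$ with $g\coloneqq\gamma_{\alpha+1,i_0}$ the least column of $\bfM$ that is $\ge\alpha+3$ and involves $\bfX_{i_0}$ --- such a column exists because the last column of $\bfX_{i_0}$ occupies position $c+1-i_0\ge c-d+1>\alpha+2$ in $\bfM$. Since $g$ must avoid the $d-1$ values $\gamma_{\alpha,i}$ ($i\ne i_0$) and be $\ge\alpha+3$, it lies in the range $\{\alpha+\ell_\alpha+1,\dots,c-d+\ell_\alpha-1\}$. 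But $G_{\alpha+1}$ must again have the form prescribed by \eqref{eqn:def-ell-alpha} (now with $\alpha+1$ in place of $\alpha$), and among the integers in that range only the two endpoints $\alpha+\ell_\alpha+1$ and $c-d+\ell_\alpha-1$ produce such a set when adjoined to $G_\alpha\setminus\{\alpha+2\}$: the former forces $\ell_{\alpha+1}=\ell_\alpha$, the latter $\ell_{\alpha+1}=\ell_\alpha-1$.

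Granting the claim, the conclusion is a short interval computation. If $\ell_{\alpha+1}=\ell_\alpha$, then $A_{\alpha+1}$ and $A_\alpha$ have the same right-hand interval and the left-hand interval of $A_{\alpha+1}$ is that of $A_\alpha$ shifted up by $1$, so $A_\alpha\setminus A_{\alpha+1}=\{\alpha\}$ and $A_{\alpha+1}\setminus A_\alpha=\{\alpha+\ell_\alpha+1\}$; here one uses $\alpha\le c-d-3$ to see that $\alpha$ is not in the right-hand interval of $A_{\alpha+1}$ and that $\alpha+\ell_\alpha+1$ lies in neither interval of $A_\alpha$. If $\ell_{\alpha+1}=\ell_\alpha-1$ (hence $\ell_\alpha\ge3$), the left-hand interval of $A_{\alpha+1}$ is that of $A_\alpha$ with $\alpha$ removed and its right-hand interval is that of $A_\alpha$ with $c-d+\ell_\alpha-2$ adjoined at the bottom, so $A_\alpha\setminus A_{\alpha+1}=\{\alpha\}$ and $A_{\alpha+1}\setminus A_\alpha=\{c-d+\ell_\alpha-2\}$, again using $\alpha\le c-d-3$ to exclude coincidences. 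In either case both differences are singletons, which is the assertion.

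The step that requires genuine care is pinning down $g$: one has to invoke that $G_{\alpha+1}$ inherits the two-interval normal form of \eqref{eqn:def-ell-alpha} (legitimate because $\alpha+1\in[c-d-2]$), and one must separately handle the degenerate boundary cases $\ell_\alpha=2$ and $\ell_\alpha=d+1$, where one of the two intervals of $G_\alpha$ (respectively of $A_\alpha$) collapses; there the constraint $2\le\ell_{\alpha+1}\le d+1$ is exactly what is needed to force $g$ to be the correct endpoint. Everything else --- the disjointness of the intervals and the non-membership checks above --- is routine given $\alpha\le c-d-3$.
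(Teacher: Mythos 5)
Your proof is correct and follows essentially the same route as the paper's: both rest on the observation that passing from $\alpha$ to $\alpha+1$ changes only $\gamma_{\alpha,i_0}$ for the single block $\bfX_{i_0}$ owning column $\alpha+2$ of $\bfM$, so exactly one element leaves and one enters the leaves set. Your explicit dichotomy $\ell_{\alpha+1}\in\{\ell_\alpha-1,\ell_\alpha\}$ merely spells out what the paper reads off directly, and in the case $\ell_{\alpha+1}=\ell_\alpha-1$ your computation in fact identifies the incoming leaf more accurately as $(\gamma_{\alpha+1,i_0}-1,\gamma_{\alpha+1,i_0})$, whereas the paper writes $(\gamma_{\alpha+1,i_0},\gamma_{\alpha+1,i_0}+1)$ in both cases; this does not affect the cardinality statement being proved.
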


\begin{proof}
    Recall that for each such $\alpha$ and each $i\in[d]$, the integer $\gamma_{\alpha,i}$ is the least index $\gamma\ge \alpha+2$ such that the $\gamma$-th column of $\bfM$ involves variables from the block $\bfX_i$. Furthermore, the union in (\ref{eqn:def-ell-alpha}) is actually disjoint.
   
    Therefore, $\gamma_{\alpha+1,i}\ge \gamma_{\alpha,i}$. And when $\gamma_{\alpha,i}\ge (\alpha+1)+2$, then $\gamma_{\alpha+1,i}=\gamma_{\alpha,i}$.  Now, suppose that $\alpha+2=\gamma_{\alpha,i_0}$. Then for $i\ne i_0$, we have $\gamma_{\alpha,i}=\gamma_{\alpha+1,i}$. 
    Whence, $\frakL_{\alpha+1}\setminus \frakL_{\alpha} = \{(\gamma_{\alpha+1,i_0},\gamma_{\alpha+1,i_0}+1)\}$ and $\frakL_{\alpha} \setminus \frakL_{\alpha+1}=\{(\alpha,\alpha+1)\}$.
\end{proof}

It is time to introduce the expected linear ordering.
Let $S=\KK[T_v\mid v\in V]$ be the base ring. Endow it with the lexicographic order $>_{\lex}$ with respect to the linear order on the variables $T_v$'s such that
\[
    T_{v_1}>T_{v_2} \quad \Leftrightarrow \quad \text{the leftmost nonzero component of $v_1-v_2$ is negative};
\]
here, by abuse of notation, we also think of $V$ as a set of integral points on $\ZZ_+^2$. Recall that the facet $F$ of the initial complex $\Delta$ is bijectively related to the monomial 
\[
    \widehat{\bdT^{F}} \coloneqq \prod_{v\in V\setminus F}T_v 
\]
in the minimal monomial generating set $G((\ini(P))^{\vee})$ by \cite[Lemma 1.5.3]{MR2724673}. Therefore, to impose a linear order on $G((\ini(P))^{\vee})$ amounts to giving a linear order of the facets. For later reference, facets with the common leaves set given by \eqref{leaves-alpha} will be grouped into the family \emph{$\calG_{\alpha}$}. 

\begin{Notation}\label{leafOrder}
    The expected linear order on the facets, denoted by $\succ$, will be as follows.
    \begin{enumerate}[a]
        \item If two facets $F\in \calG_{\alpha}$ and $F'\in \calG_{\alpha'}$ respectively with $\alpha>\alpha'$, then $F\succ F'$.
        \item Next, consider the facets $F$ and $F'$ within the same group $\calG_{\alpha}$. Then $F\succ F'$ if and only if $\widehat{\bdT^{F}} >_{\lex} \widehat{\bdT^{F'}}$.
    \end{enumerate}
\end{Notation}

\begin{Example}
    \label{first-facet}
    The first facet of the group $\calG_{\alpha}$ with respect to our ordering $\succ$ is the one with vertices
    \begin{align*}
        (1,c),(2,c),\dots, (c-2,c)
    \end{align*}
    in addition to the leaves given in \eqref{leaves-alpha}.

    For example, let $\bdn=(2,2,4,4)$. Then the matrix
    \[
        \bfM=
        \begin{pmatrix}
            x_{1,0} & x_{2,0} & x_{3,0} & x_{4,0} & x_{3,1} & x_{4,1} & x_{3,2} & x_{4,2} & x_{4,3} & x_{3,3} & x_{2,1} & x_{1,1} \\
            x_{1,1} & x_{2,1} & x_{3,1} & x_{4,1} & x_{3,2} & x_{4,2} & x_{3,3} & x_{4,3} & x_{4,4} & x_{3,4} & x_{2,2} & x_{1,2}
        \end{pmatrix}.
    \]
    If we choose $\alpha=2$, then the leaves set is
    \[
        \Set{(2,3),(3,4), (4,5),(5,6),(10,11),(11,12)};
    \]
    see also \cite[Example 3.9]{MR4068250}. The collection of open intervals in the \Cref{fig:facets-2244} is the first facet with respect to the given $\alpha=2$ in the corresponding complex $\Delta$.
    
    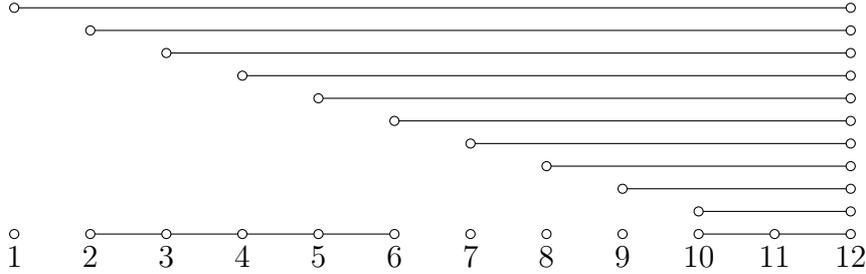
\begin{figure}[htb]
        \begin{tikzpicture}
            \draw (1,-1.2)--(5,-1.2);
            \draw (9,-1.2)--(11,-1.2);
            \draw [fill=white]  (0,-1.2) circle [radius=0.06];
            \draw [fill=white]  (1,-1.2) circle [radius=0.06];
            \draw [fill=white]  (2,-1.2) circle [radius=0.06];
            \draw [fill=white]  (3,-1.2) circle [radius=0.06];
            \draw [fill=white]  (4,-1.2) circle [radius=0.06];
            \draw [fill=white]  (5,-1.2) circle [radius=0.06];
            \draw [fill=white]  (6,-1.2) circle [radius=0.06];
            \draw [fill=white]  (7,-1.2) circle [radius=0.06];
            \draw [fill=white]  (8,-1.2) circle [radius=0.06];
            \draw [fill=white]  (9,-1.2) circle [radius=0.06];
            \draw [fill=white] (10,-1.2) circle [radius=0.06];
            \draw [fill=white] (11,-1.2) circle [radius=0.06];

            \node at  (0,-1.5) {1};
            \node at  (1,-1.5) {2};
            \node at  (2,-1.5) {3};
            \node at  (3,-1.5) {4};
            \node at  (4,-1.5) {5};
            \node at  (5,-1.5) {6};
            \node at  (6,-1.5) {7};
            \node at  (7,-1.5) {8};
            \node at  (8,-1.5) {9};
            \node at  (9,-1.5) {10};
            \node at (10,-1.5) {11};
            \node at (11,-1.5) {12};

            \draw (9,-0.9)--(11,-0.9);
            \draw [fill=white] (9,-0.9) circle [radius=0.06];
            \draw [fill=white] (11,-0.9) circle [radius=0.06];

            \draw (8,-0.6)--(11,-0.6);
            \draw [fill=white] (8,-0.6) circle [radius=0.06];
            \draw [fill=white] (11,-0.6) circle [radius=0.06];

            \draw (7,-0.3)--(11,-0.3);
            \draw [fill=white] (7,-0.3) circle [radius=0.06];
            \draw [fill=white] (11,-0.3) circle [radius=0.06];

            \draw (6,0.0)--(11,0.0);
            \draw [fill=white] (6,0.0) circle [radius=0.06];
            \draw [fill=white] (11,0.0) circle [radius=0.06];

            \draw (5,0.3)--(11,0.3);
            \draw [fill=white] (5,0.3) circle [radius=0.06];
            \draw [fill=white] (11,0.3) circle [radius=0.06];

            \draw (4,0.6)--(11,0.6);
            \draw [fill=white] (4,0.6) circle [radius=0.06];
            \draw [fill=white] (11,0.6) circle [radius=0.06];

            \draw (3,0.9)--(11,0.9);
            \draw [fill=white] (3,0.9) circle [radius=0.06];
            \draw [fill=white] (11,0.9) circle [radius=0.06];

            \draw (2,1.2)--(11,1.2);
            \draw [fill=white] (2,1.2) circle [radius=0.06];
            \draw [fill=white] (11,1.2) circle [radius=0.06];

            \draw (1,1.5)--(11,1.5);
            \draw [fill=white] (1,1.5) circle [radius=0.06];
            \draw [fill=white] (11,1.5) circle [radius=0.06];

            \draw (0,1.8)--(11,1.8);
            \draw [fill=white] (0,1.8) circle [radius=0.06];
            \draw [fill=white] (11,1.8) circle [radius=0.06];
        \end{tikzpicture}
        \caption{The first facet in $\calG_\alpha$ when $\alpha=2$ and $\bdn=(2,2,4,4)$}
        \label{fig:facets-2244}
    \end{figure}
    
    And the Hasse diagram of the poset as considered in \Cref{lem-facets} is pictured in \Cref{fig:tree-2244}.
    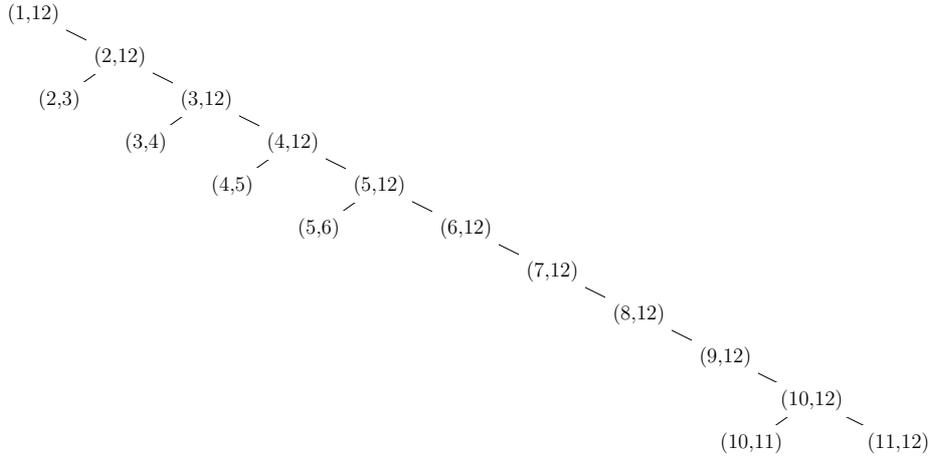
\begin{figure}[htb]
        \scalebox{0.65}{
            \begin{tikzpicture}[scale=1.75]
                \draw (1,-0.5)--(0,0);
                \draw (1,-0.5)--(0.3,-1);
                \draw (1,-0.5)--(2,-1);
                \draw (1.3,-1.5)--(2,-1);
                \draw (3,-1.5)--(2,-1);
                \draw (3,-1.5)--(2.3,-2);
                \draw (3,-1.5)--(4,-2);
                \draw (4,-2.0)--(3.3,-2.5);
                \draw (4,-2.0)--(5,-2.5);
                \draw (5,-2.5)--(6,-3.0);
                \draw (6,-3.0)--(7,-3.5);
                \draw (7,-3.5)--(8,-4.0);
                \draw (8,-4.0)--(9,-4.5);
                \draw (9,-4.5)--(10,-5.0);
                \draw (9,-4.5)--(8.3,-5.0);
                \node[fill=white] at (0,0) {(1,12)};
                \node[fill=white] at (0.3,-1.0) {(2,3)};
                \node[fill=white] at (1,-0.5) {(2,12)};
                \node[fill=white] at (1.3,-1.5) {(3,4)};
                \node[fill=white] at (10,-5.0) {(11,12)};
                \node[fill=white] at (2,-1.0) {(3,12)};
                \node[fill=white] at (2.3,-2.0) {(4,5)};
                \node[fill=white] at (3,-1.5) {(4,12)};
                \node[fill=white] at (3.3,-2.5) {(5,6)};
                \node[fill=white] at (4,-2.0) {(5,12)};
                \node[fill=white] at (5,-2.5) {(6,12)};
                \node[fill=white] at (6,-3.0) {(7,12)};
                \node[fill=white] at (7,-3.5) {(8,12)};
                \node[fill=white] at (8,-4.0) {(9,12)};
                \node[fill=white] at (8.3,-5.0) {(10,11)};
                \node[fill=white] at (9,-4.5) {(10,12)};
            \end{tikzpicture}
        }
        \caption{Binary tree of the first facet in $\calG_\alpha$ when $\alpha=2$ and $\bdn=(2,2,4,4)$}
        \label{fig:tree-2244}
    \end{figure}
\end{Example}

Now, we are ready to show that the Alexander dual ideal $(\ini(P))^{\vee}$ has linear quotients.
\begin{Proposition}
    \label{prop:linear-quotients}
   When $c\ge d+4$, the ideal $(\ini(P))^{\vee}$ has linear quotients with respect to the given ordering $\succ$.
\end{Proposition}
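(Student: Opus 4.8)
The plan is to recast the claim as a combinatorial statement about the facets of $\Delta$ and then prove it by a case analysis driven by the binary-tree description in \Cref{lem-facets}. Recall from the paragraph before \Cref{leafOrder} that, by \cite[Lemma 1.5.3]{MR2724673}, the minimal monomial generators of $(\ini(P))^{\vee}$ are exactly the monomials $\widehat{\bdT^{F}}$ as $F$ runs over the facets of $\Delta$, and that for two facets $F,F'$ one has $\widehat{\bdT^{F'}}/\gcd\bigl(\widehat{\bdT^{F'}},\widehat{\bdT^{F}}\bigr)=\prod_{v\in F\setminus F'}T_{v}$. Unwinding the definition of linear quotients, $(\ini(P))^{\vee}$ has linear quotients with respect to $\succ$ if and only if, for every ordered pair of facets $F'\succ F$, there are a vertex $v\in F\setminus F'$ and a facet of the form $F''=(F\setminus\{v\})\cup\{v'\}$ with $v'\notin F$ and $F''\succ F$. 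Call such an $F''$ a \emph{good swap of $F$ at $v$}, and let $W_{F}\subseteq F$ be the set of vertices admitting a good swap of $F$ --- equivalently, the $T_{v}$ with $v\in W_{F}$ are exactly the variables belonging to the colon ideal attached to $F$ in the linear-quotients condition. The task is thus: show that $W_{F}\cap(F\setminus F')\neq\varnothing$, i.e.\ $W_{F}\not\subseteq F'$, for every facet $F'\succ F$.

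\textbf{A toolbox of good swaps.} I would then catalogue the single-vertex modifications of a facet $F\in\calG_{\alpha}$, read off from its tree $\calT_{F}$, that again produce facets; this rests on \Cref{lem-facets} and \Cref{leaves-set-diff-1}. Since $|F|=c+d$, of which $|\frakL_{\alpha}|=d+2$ are leaves and $c-2$ are interior nodes, and since a facet in $\calG_{\alpha}$ is determined by its set of interior nodes, two kinds of moves are relevant. \emph{Interior moves} keep the leaf set $\frakL_{\alpha}$ and trade one interior node of $\calT_{F}$ for another: these are the local reshapings of $\calT_{F}$ at a node $\frakI$ that convert a one-child node into a two-child node (or the reverse) or shift a branching point, always subject to conditions (iii) and (iv) of \Cref{lem-facets}; the direction in which such a move takes $F$ along $\succ$ is read off from the description of $\succ$ inside $\calG_{\alpha}$, namely that $F'\succ F$ holds precisely when the lexicographically smallest point of $F\bigtriangleup F'$ lies in $F\setminus F'$. \emph{Leaf moves} exploit \Cref{leaves-set-diff-1}: when $\calT_{F}$ has the right shape near its leftmost leaf $(\alpha,\alpha+1)$, deleting that leaf, merging its parent with the surviving child as allowed by condition (iii) of \Cref{lem-facets}, and grafting in the unique leaf of $\frakL_{\alpha+1}\setminus\frakL_{\alpha}$ yields a facet of $\calG_{\alpha+1}$, hence one that is automatically $\succ$-larger; for the facet of \Cref{first-facet} (whose interior nodes are the intervals $(k,c)$, $1\le k\le c-2$) this turns the caterpillar of $\calG_{\alpha}$ into that of $\calG_{\alpha+1}$.

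\textbf{Case analysis.} Now fix a facet $F\in\calG_{\alpha}$ that is not $\succ$-maximal and a facet $F'\succ F$; by \Cref{leafOrder} there are two cases. \emph{Case A: $F'\in\calG_{\alpha'}$ with $\alpha'>\alpha$.} Here the hypothesis $c\ge d+4$ forces the interval $(\alpha,\alpha+1)$ to be too short to occur in $\calT_{F'}$ --- it cannot be a leaf of $F'$, since $\alpha$ lies below $\alpha'$ and below the whole second block of $\frakL_{\alpha'}$, and it cannot be an interior node --- so $(\alpha,\alpha+1)\in F\setminus F'$. If $\calT_{F}$ admits the leaf move at $(\alpha,\alpha+1)$ (in particular if $F$ is the caterpillar of \Cref{first-facet}), then $(\alpha,\alpha+1)\in W_{F}$ and we are done; otherwise I would single out an interior node of $\calT_{F}$ lying in $F\setminus F'$ at which an interior move raises $F$ within $\calG_{\alpha}$, which hinges on the same tree analysis as the next case. \emph{Case B: $F,F'\in\calG_{\alpha}$ with $\widehat{\bdT^{F'}}>_{\lex}\widehat{\bdT^{F}}$.} Let $v$ be the lexicographically smallest point of $F\bigtriangleup F'$; by the description of $\succ$ we have $v\in F\setminus F'$, and the aim is to show $v\in W_{F}$. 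For this one examines the role of $v$ in $\calT_{F}$ --- leaf, one-child node, or two-child node --- and uses the crucial fact that $F$ and $F'$ agree on every vertex lexicographically smaller than $v$; that restriction pins down a unique admissible reshaping of $\calT_{F}$ at, or just above, $v$, it removes $v$ and nothing else, and the vertex it introduces produces, once more by the description of $\succ$, a facet $F''\succ F$.

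\textbf{Main obstacle.} The heart of the argument --- the part described in the introduction as "quite involved" --- is Case B together with the interior-move portion of the toolbox, for two reasons. First, a single edit to one node of $\calT_{F}$ normally forces further edits deeper in the tree, so one has to invoke the minimality of $v$ in $F\bigtriangleup F'$ to conclude that exactly one edit is legitimate and that it amounts to a genuine single-vertex swap. Second, one must verify at each step that the four conditions of \Cref{lem-facets} persist --- in particular the length bookkeeping in conditions (iii) and (iv) and the invariance of the leaf set $\frakL_{\alpha}$ --- which is precisely the detailed binary-tree bookkeeping flagged in the introduction. Case A, and the verification that the leaf moves do what is claimed, are comparatively routine once the inequality $c\ge d+4$ and the caterpillar/non-caterpillar dichotomy are in hand.
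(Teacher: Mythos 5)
Your general framework is the same as the paper's: identify, for each facet $F$, the set $W_{F}$ of vertices admitting a ``good swap'' (these index exactly the variables in the colon ideal), and then show that $W_{F}\cap(F\setminus F')\neq\varnothing$ for every $F'\succ F$. That reduction is correct. The genuine gap is in your Case B, which is also the heart of the matter: you propose to take $v$ to be the lexicographically smallest point of $F\bigtriangleup F'$ and to show $v\in W_{F}$. That claim is false. Take $\bdn=(2,2,4,4)$ (so $c=12$, $d=4$) and $\alpha=2$, with leaf set $\frakL_{2}=\{(2,3),(3,4),(4,5),(5,6),(10,11),(11,12)\}$. Let $F$ be the facet whose interior nodes are $(1,12)$, $(2,12)$, $(10,12)$, the chain $(2,10)\supset(2,9)\supset(2,8)\supset(2,7)\supset(2,6)$, and $(2,4),(4,6)$ below $(2,6)$; and let $F'$ be the facet with interior nodes $(1,12)$, $(2,12)$, $(2,11)$, $(2,6)$, $(2,4)$, $(4,6)$ and the chain $(6,11)\supset(7,11)\supset(8,11)\supset(9,11)$. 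Both satisfy all conditions of \Cref{lem-facets}, the lexicographically smallest point of $F\bigtriangleup F'$ is $(2,7)\in F\setminus F'$, so $F'\succ F$. But $(2,7)$ admits no good swap: any facet containing $F\setminus\{(2,7)\}$ must contain both $(2,6)$ and $(2,8)$, and since the interval $(6,8)$ contains no leaf of $\frakL_{2}$, \Cref{rmk-key-facts}~\ref{rmk-key-facts-4} forces $(2,7)$ back into that facet. So the vertex your argument designates cannot be swapped out; the linear quotient condition for this pair is instead witnessed by a different vertex of $F\setminus F'$, e.g.\ $(2,10)$, which has the right sibling $(10,12)$.

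This is not a cosmetic issue: the hard part of the proof is precisely to locate, for an arbitrary $F'\succ F$, \emph{which} vertex of $F\setminus F'$ admits a good swap, and the lex-minimal one is the wrong candidate. The paper's route is to first characterize $W_{F}$ intrinsically in terms of the tree $\calT_{F}$ (a non-root, non-top vertex $(b,i)$ of a column $F_{b}$ lies in $W_{F}$ essentially when it has a right sibling or two children, plus special rules at the bottom of each column and at the leftmost leaf), and then to argue by contradiction: assuming $W_{F}\subseteq F'$, one analyzes the smallest $b$ with $F_{b}\neq F_{b}'$ and derives crossing-interval contradictions. In that analysis the relevant witness is often not the first vertex where $F$ and $F'$ disagree (in the example above the disagreement starts at $(2,7)$ but the witness is higher up the column). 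Your Case A has a similar, milder issue: when $F_{\alpha}=\{(\alpha,\alpha+1)\}$ the leaf $(\alpha,\alpha+1)$ itself is not swappable, and the witness is the interval $(a,\alpha+1)$ for the smallest $a$ with $(a,\alpha+1)\in F$; your ``otherwise'' clause gestures at this but supplies no argument. As written, the proposal therefore does not constitute a proof.
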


\begin{proof}
    Let $F$ be a facet of $\Delta$ and suppose that $F\in \calG_{\alpha}$. We want to show that the minimal monomial generating set \emph{$G_F$}$\coloneqq G(I_F)$ of the colon ideal
    \[
        I_F\coloneqq \braket{\widehat{\bdT^{F'}}\mid F'\succ F}: \widehat{\bdT^F}
    \]
    is linear. Notice that 
    \[
        \braket{\widehat{\bdT^{F'}}}:\widehat{\bdT^{F}}=\bdT^{F\setminus F'}\coloneqq \prod_{v\in F\setminus F'}T_v.
    \]

    Suppose that $F$ is the first facet within $\calG_{\alpha}$ with respect to the designated linear order $\succ$. If $\alpha=c-d-2$, then $F$ is actually the first facet of $\Delta$ and there is nothing to show in this case. Otherwise, $\alpha<c-d-2$. Let $F'$ be the first facet of $\calG_{\alpha+1}$. It is clear that $F'\succ F$ and
    \[
        F\setminus F'=\frakL_{\alpha}\setminus \frakL_{\alpha+1}=\{({\alpha},{\alpha}+1)\}
    \]
    by \Cref{leaves-set-diff-1} and  \Cref{first-facet}.  Notice that for any facet $F''$ preceding $F$, $({\alpha},{\alpha}+1)\in F\setminus F''$.  Whence, the colon ideal $I_F$ is principal and linear.

    Suppose instead that $F$ is not the first facet in $\calG_{\alpha}$. This situation is more involved. We will break the long proof into three parts, according to the following plans.
    \begin{enumerate}
        \item Firstly, we describe the expected linear generating set {$G_F$}. 
        \item Next, we show that the variables described above really show up as some colons of monomials, and they are the only variables possessing this property.
        \item Finally, we show that we don't have minimal monomial generators in $G_F$ of higher degrees.
    \end{enumerate}
    After showing the above three parts, it is clear then that  the ideal $(\ini(P))^{\vee}$ has linear quotients. At this point, one may suggest only proving the last step. But to successfully give the proof, we have to clear higher-degree generators by the linear ones.

    \subsection{Linear generators of the colon ideal $I_F$}
    \label{Linear generators}
    We separate the vertices in $F$ into collections for each fixed facet $F\in \calG_{\alpha}$. For $b\in [c-1]$, let
    \begin{equation}
        \label{F_b}
        F_b\coloneqq \Set{v\in F\mid v=(b,*)}.
    \end{equation}
    If the cardinality \emph{$k_b$}$\coloneqq |F_b|\ge 1$, we may assume that
    \[
        F_b=\Set{(b,i_{b,1}),(b,i_{b,2}),\dots,(b,i_{b,k_b})}
    \]
    with $i_{b,1}<i_{b,2}<\cdots<i_{b,k_b}$. 
    It is clear that $(b,i_{b,j+1})$ is the parent node of $(b,i_{b,j})$ when $1\le j< k_b$.
    Now, assume that $k_b\ge 1$, and we describe whether $T_{b,i_{b,j}}\in G_F$ for $j\in [k_b]$.
    \begin{enumerate}[a]
        \item If $j=k_b$, then $T_{b,i_{b,k_b}}\notin G_F$.
        \item Suppose that $2\le j< k_b$. If
            \begin{enumerate}[i]
                \item the node at $(b,i_{b,j})$ has a right sibling in $\calT_F$, or
                \item the node at $(b,i_{b,j})$ has two children in $\calT_F$,
            \end{enumerate}
            then $T_{b,i_{b,j}}\in G_F$. Otherwise, $T_{b,i_{b,j}}\notin G_F$.
        \item Suppose that $j=1<k_b$.
            \begin{enumerate}[i]
                \item Suppose that $(b,i_{b,1})$ is a leaf in $\calT_F$, namely that $i_{b,1}=b+1$.
                    \begin{enumerate}[1]
                        \item If $b\ne \alpha$, then $T_{b,i_{b,1}}\notin G_F$.
                        \item Suppose that $b=\alpha$, i.e., $(b,i_{b,1})$ is the leftmost leaf in $\calT_F$.
                            \begin{itemize}
                                \item If $b=c-d-2$, then $\calG_{\alpha}$ is the first group to be considered. Whence, we expect $T_{b,i_{b,1}}\notin G_F$.
                                \item Otherwise, we expect $T_{b,i_{b,1}}=T_{\alpha,\alpha+1} \in G_F$ and $\alpha\neq c-d+2$.
                            \end{itemize}
                    \end{enumerate}
                \item Otherwise, $i_{b,1}>b+1$. Whence, we always have $T_{b,i_{b,1}}\in G_F$.
            \end{enumerate}
    \end{enumerate}
    For later reference, we denote the set of the expected linear generators of $F$ with respect to $b$ described above as $LG_{F,b}$. Then in the next two subsections, we will see that the minimal generating set $G_F$ is precisely the set
    \[
        LG_F\coloneqq \bigcup_{b\in [c-1]} LG_{F,b}.
    \]

    \begin{Example}
        Consider the case when $\bdn=(n_1,n_2,n_3)=(2,4,5)$. It is not difficult to see that 
        \begin{align*}
            F=\{(1,2),\, (1,3),\, &(1,4),\, (1,6), \,(1,7),\, (1,8), \,(1,9), \,(1,11),\,\\ 
            &(2,3), \,(3,4), \,(4,5),\, (4,6), \,(9,11), \,(10,11)\}
        \end{align*}
        is a facet of the associated complex $\Delta$. Consider $b=1$. The subset $F_b$ consists of the vertices written on the first line. And our criterion above says that the corresponding linear generators for $b=1$ are
        \[
            LG_{F,1}=\{T_{1,2}, T_{1,3}, T_{1,4}, T_{1,6}, T_{1,9}\}\subseteq LG_F.
        \]
        Indeed, it is not difficult to see that equality holds in this special case.
    \end{Example}

    \subsection{Why the vertex of $F$ shows or not shows up in $G_F$?}
    Before we proceed, notice first the following consequences of \Cref{lem-facets}.
    \begin{Observations}
        \label{rmk-key-facts}
        Let $F$ be a facet of the aforementioned initial complex $\Delta$. Recall that it is convenient to consider the vertex $(\alpha,\beta)$ in $\Delta$ as an open interval in the real line $\RR$ with integral endpoints.
        \begin{enumerate}[i]
            \item Vertices of $F$ are \emph{non-crossing}, i.e., if we consider the vertices $\frakI_1,\frakI_2$ of $F$ as open intervals, then $\frakI_1\cap \frakI_2=\varnothing$, or $\frakI_1\subseteq \frakI_2$, or $\frakI_2\subseteq \frakI_1$.
            \item \label{rmk-key-facts-2} 
                If $(\alpha,\beta)\in F$ is not the root and has no sibling, then its parent node is either $(\alpha-1,\beta)$ or $(\alpha,\beta+1)$. Correspondingly, $(\alpha-1,\alpha)$ or $(\beta,\beta+1)$ is not a leaf of $\calT_F$.
            \item \label{rmk-key-facts-3} 
                If as an open interval, $(\alpha,\beta)$ contains no leaf of $\calT_F$, then the vertex $(\alpha,\beta)$ does not belong to $F$.
            \item \label{rmk-key-facts-4}  If both $(\alpha,\beta)$ and  $(\alpha,\beta')$ belong to $F$ with $\beta<\beta'$ such that $(\beta,\beta')$ contains no leaf of $\calT_F$, then $(\alpha,\beta+j)\in F$ for all $j$ with $0 < j < \beta'-\beta$. To see this, notice that for each such fixed $j$, 
                $F$ must have at least one vertex of the form $(\beta+j,\gamma)$ or $(\alpha',\beta+j)$.
                In the first subcase, $\gamma\le \beta'$ is impossible by the previous observation. But when $\gamma>\beta'$, this is also impossible, since the two open intervals $(\beta+j,\gamma)$ and $(\alpha,\beta')$ cross. As for the second subcase,  by the previous observation, it is clear that $\alpha'< \beta$. If $\alpha<\alpha' <\beta$, then the two open intervals $(\alpha',\beta+j)$ and $(\alpha,\beta)$ cross. And if $\alpha'<\alpha$, then the two open intervals $(\alpha',\beta+j)$ and $(\alpha,\beta')$ cross. Therefore, $\alpha'=\alpha$ and $(\alpha,\beta+j)$ is a vertex of $F$, as expected.
           
            \item \label{rmk-key-facts-6} Suppose that $(\gamma,\gamma+1)\in V$ is not a leaf of $\calT_F$. Consider the smallest interval (node) in $\calT_F$ that contains $(\gamma,\gamma+1)$. Then by \Cref{lem-facets} and the minimality, this node is either $(\gamma',\gamma+1)$ or $(\gamma, \gamma')$ for some $\gamma'$. And correspondingly it has a unique child node $(\gamma',\gamma)$ or $(\gamma+1,\gamma')$.
        \end{enumerate}
        Another fact is that the variable $T_v\in G_F$ if and only if $F\setminus F'=\{v\}$ for some
        $F'\succ F$. And for this pair $F$ and $F'$, if $F\setminus F'=\{(i,j)\}$ while $F'\setminus F=\{(i',j')\}$, then
        \begin{equation*}
            \text{either $i'>i$ or $i'=i$ and $j'>j$.} 
        \end{equation*}
    \end{Observations}

    Now, we are ready to justify the linear generating set $LG_F$ presented in the previous subsection \ref{Linear generators}. Correspondingly, we also have three cases.
    \begin{enumerate}[a]
        \item If $j=k_b$, we show that $T_{b,i_{b,k_b}}\notin G_F$. We don't have to worry about the case when $b=1$, since $(1,c)$ is the common root.  Suppose now for contradiction that there is some $F'$ with $F'\succ F$ and $F\setminus F'=\{(b,i_{b,k_b})\}$. Thus, we have two subcases.

            Suppose first that $F\in \calG_{\alpha}$ and $F'\in \calG_{\alpha'}$ with $\alpha<\alpha'$. Since necessarily $(\alpha,\alpha+1)\in \frakL_{\alpha}\setminus \frakL_{\alpha'}\subseteq F\setminus F'$ while $|F\setminus F'|=1$, it follows that $b=\alpha$ and $i_{b,k_b}=\alpha+1$. 
            Since the leftmost leaf of $\calT_{F}$ is $(\alpha,\alpha+1)$ and $|F_\alpha|=1$, $F$ must contain the vertex $(\alpha-1,\alpha+1)$
            by item \ref{rmk-key-facts-2} of \Cref{rmk-key-facts}.
            Hence, $F'$ must also contain this vertex. But the leftmost leaf of $\calT_{F'}$ is $(\alpha',\alpha'+1)$ with $\alpha'>\alpha$, contradicting the item \ref{rmk-key-facts-3} of \Cref{rmk-key-facts}. 

            Suppose now instead that $F$ and $F'$ have the same leaves set, i.e., $\alpha=\alpha'$. Then $(b,i_{b,k_b})$ is not a leaf of $\calT_F$ and must have some child. The parent node of $(b,i_{b,k_b})$ in $\calT_F$ must have the form $(b',i_{b,k_b})$ for some $b'<b$. 
            Notice that the unique vertex in $F'\setminus F$ that the vertex $(b,i_{b,k_b})$ is switched to cannot take the form $(b,b'')$ with $b''>i_{b,k_b}$, since the interval $(b,b'')$ will cross the interval $(b',i_{b,k_b})$. Meanwhile, this unique vertex cannot take the form $(b'',b''')$ with $b''>b$, since $(b',i_{b,k_b})$ will not have a legal branch in $\calT_{F'}$; see \ref{lem-facets-iii} and \ref{lem-facets-iv} of \Cref{lem-facets}. Therefore, we have a contradiction to the fact stated at the end of \Cref{rmk-key-facts}.

            Thus, $T_{b,i_{b,k_b}}\notin G_F$.

        \item Suppose that $2\le j< k_b$.
            \begin{enumerate}[i]
                \item Suppose that the node $(b,i_{b,j})$ has a sibling in $\calT_F$. It is clear that the parent node of $(b,i_{b,j})$ is $(b,i_{b,j+1})$ and $(b,i_{b,j})$ is the left child in the binary tree. Whence, the sibling of $(b,i_{b,j})$ is $(i_{b,j},i_{b,j+1})$. Now, let
                    \[
                        F'\coloneqq F\cup\{(i_{b,j-1},i_{b,j+1})\} \setminus \{(b,i_{b,j})\}.
                    \]
                    Obviously $F'\succ F$. And it is easy to verify that $F'$ is a legal facet and the node $(b,i_{b,j+1})$ has two children $(b,i_{b,j-1})$ and $(i_{b,j-1},i_{b,j+1})$ in $\calT_{F'}$. This implies that $T_{b,i_{b,j}}\in G_F$.

                \item Suppose that the node $(b,i_{b,j})$ has two children in $\calT_F$. They have to be $(b,i_{b,j-1})$ and $(i_{b,j-1},i_{b,j})$. We similarly define
                    \[
                        F'\coloneqq F\cup\{(i_{b,j-1},i_{b,j+1})\} \setminus \{(b,i_{b,j})\},
                    \]
                    and see that $T_{b,i_{b,j}}\in G_F$.
                \item Suppose that the node $(b,i_{b,j})$ has only one child and has no right sibling. 
                    %Surely $(b,i_{b,j})$ is not a leaf. Whence, there is no leaf within $(i_{b,j-1},i_{b,j+1})$. It follows that 
                    Whence, $i_{b,j\pm 1}=i_{b,j}\pm 1$ respectively and $(i_{b,j-1},i_{b,j+1})$ contains no leaf in $\calT_F$ by \Cref{rmk-key-facts} \ref{rmk-key-facts-2}. Suppose for contradiction that we have some facet $F'$ such that $F'\succ F$ and $F\setminus F'=\{(b,i_{b,j})\}$. Since $(b,i_{b,j})$ is not a leaf, $F$ and $F'$ have identical leaves set. Therefore $(i_{b,j-1},i_{b,j+1})$ contains no leaf in $\calT_{F'}$ as well. The node $(b,i_{b,j-1})$ must be a grandchildren of $(b,i_{b,j+1})$ in $\calT_{F'}$ by \Cref{rmk-key-facts} \ref{rmk-key-facts-4} and \Cref{lem-facets} \ref{lem-facets-iii} and \ref{lem-facets-iv}. Meanwhile, $(b,i_{b,j})$ is the only possible option to connect $(b,i_{b,j-1})$ with $(b,i_{b,j+1})$. This makes $F=F'$, a contradiction. Therefore, $T_{b,i_{b,j}}\notin G_F$.
            \end{enumerate}
        \item Suppose that $j=1<k_b$.
            \begin{enumerate}[i]
                \item We first consider the case when $(b,i_{b,1})$ is a leaf in $\calT_F$, namely that $i_{b,1}=b+1$. Suppose that $T_{b,i_{b,1}}\in G_T$. Then we have $F\in \calG_{\alpha}$ and $F'\in \calG_{\alpha'}$ for some $\alpha<\alpha'$ such that $F\setminus F'=\{(b,b+1)\}$. It is clear then that $\alpha<\alpha'\le c-d-2$. And again, since $(\alpha,\alpha+1)\in \frakL_{\alpha}\setminus \frakL_{\alpha'}\subseteq F\setminus F'$ while $|F\setminus F'|=1$, it follows that $b=\alpha$. 

                    Conversely, suppose that $b=\alpha<c-d-2$ with $F\in \calG_\alpha$. Notice that $\frakL_{\alpha+1}\setminus \frakL_{\alpha}=\{(\beta,\beta+1)\}$ for some $\beta$ by \Cref{leaves-set-diff-1}, and this $(\beta,\beta+1)\notin F$.
                    Meanwhile, since the invariant $\ell_\alpha$ defined before \eqref{eqn:def-ell-alpha} satisfies $\ell_{\alpha}\ge 2$, both $(\alpha,\alpha+1)$ and $(\alpha+1,\alpha+2)$ are leaves of $\calT_F$.
                    Thus, as $k_{\alpha}\ge 2$, the node $(\alpha,\alpha+1)$ has a right sibling $(\alpha+1,i_{\alpha,2})$ by \Cref{lem-facets} \ref{lem-facets-iii}. It is clear that any vertex in $\calT_F$ strictly containing $(\alpha,\alpha+1)$ will contain its parent node $(\alpha,i_{\alpha,2})$, and consequently will contain $(\alpha+1,i_{\alpha,2})$. Now, it is not difficult to verify that
                    \[
                        F'\coloneqq F\cup\{(\beta,\beta+1)\}\setminus \{(\alpha,\alpha+1)\} \in \calG_{\alpha+1}
                    \]
                    is a well-defined facet, and therefore, $T_{\alpha,\alpha+1}\in G_F$.

                \item Next, we assume that $(b,i_{b,1})$ is not a leaf in $\calT_F$, namely that $i_{b,1}>b+1$. Since $j=1$, $(b,b+1)$ is not a leaf and $(b,i_{b,1})$ has only the right child $(b+1,i_{b,1})$ in $\calT_F$. We have two subcases.
                    \begin{itemize}
                        \item Suppose that $(b,i_{b,1})$ has a right sibling in $\calT_F$. It has to be $(i_{b,1},i_{b,2})$.
                        \item Suppose that $(b,i_{b,1})$ does not have a right sibling in $\calT_F$. 
                            %Whence, $(i_{b,1},i_{b,2})$ does not contain any leaf of $\calT_F$. It follows that 
                            Whence, $i_{b,2}=i_{b,1}+1$ and $(i_{b,1},i_{b,1}+1)$ is not a leaf of $\calT_F$ by \Cref{rmk-key-facts} \ref{rmk-key-facts-2}.
                    \end{itemize}
                    In either subcase, let
                    \[
                        F'\coloneqq F\cup\{(b+1,i_{b,2})\} \setminus \{(b,i_{b,1})\},
                    \]
                    and we can see as above that $T_{b,i_{b,1}}\in G_F$. And this completes our argument in this subsection.
            \end{enumerate}
    \end{enumerate}

    \subsection{No minimal monomial generator of higher degrees}

    In this subsection, we finish the proof by showing that $G_F=LG_F$.

    Suppose for contradiction that $F\in \calG_{\alpha}$ and $F'\in \calG_{\alpha'}$ with $F'\succ F$ such that $\braket{\widehat{\bdT^{F'}}}:\widehat{\bdT^{F}}$ provides a minimal monomial generator of higher degree in $G_F\setminus LG_F$.
    Whence, none of the linear generators in $LG_F$ will ever divide the principal generator of $\braket{\widehat{\bdT^{F'}}}:\widehat{\bdT^{F}}$. If we apply the identification $T_{i,j}\leftrightarrow (i,j)\in \Delta$, then this is just 
    \begin{equation}
        \text{$LG_F\cap (F\setminus F')=\varnothing$, or more simply, $LG_F\subseteq F\cap F'$.}
        \label{eqn:high-deg}
    \end{equation}

    If $\alpha\ne \alpha'$, let $a\le \alpha$ be the smallest such that $(a,\alpha+1)\in F$. 
    We claim that $(a,\alpha+1)\in LG_F$. It is clear when $a=\alpha$ since $\alpha<\alpha'\le c-d-2$. Thus, we will assume $a<\alpha$. 
    The parent node of $(a,\alpha+1)$ has the form either $(a',\alpha+1)$ for some $a'<a$, or $(a,\alpha'')$ for some $\alpha''>\alpha+1$.
    The first case contradicts the minimality of $a$. Thus, we have the second case and consequently $|F_a|\ge 2$.
   
    Whence, $(a,\alpha+1)\in LG_F$, still establishing the claim. Meanwhile, since $\alpha'>\alpha$, $(a,\alpha+1)$ does not contain any leaf of $\calT_{F'}$. This implies that $(a,\alpha+1)\in F\setminus F'$, contradicting the assumption in \eqref{eqn:high-deg}.

    Therefore, we will assume in the following that $\alpha=\alpha'$, i.e., $\calT_F$ and $\calT_{F'}$ have a common leaf set. Now, let $b\in [c-1]$ be the smallest such that $F_b\ne F_b'$. Here, $F_b'$ is a subset of $F'$, just as $F_b$ defined for $F$ in \eqref{F_b}.
    \begin{enumerate}[a]
        \item Suppose that $k_b=1$. By the minimality of $b$ and the fact that $\widehat{\bdT^{F'}} >_{\lex} \widehat{\bdT^{F}}$, 
        we have $(b,j)\notin F_b'$ for all $j$ with $b+1\le j\le i_{b,1}$.
        Therefore, either $(b,i')\in F_b'$ for some $i'>i_{b,1}$ or $F_b'=\varnothing$ .

            In the first subcase, notice that the parent node of $(b,i_{b,1})$ in $\calT_F$ must take the form $(b',i_{b,1})$ for some $b'<b$. By the minimality of $b$, $(b',i_{b,1})\in F'$. But since $(b,i')$ will cross $(b',i_{b,1})$ when considered as open intervals, this is impossible.

            In the remaining subcase, one can conclude that $F'$ contains some vertices of the form $(b',b+1)$ and $(b',b)$ such that $(b,b+1)$ is not a leaf of $\calT_{F'}$, by \Cref{rmk-key-facts} \ref{rmk-key-facts-6}. Since $b'<b$, we also have $(b',b+1),(b',b)\in F$ by the minimality of $b$. Meanwhile, as $\calT_F$ and $\calT_{F'}$ have a common leaf set, $(b,b+1)$ is not a leaf of $\calT_{F}$ as well. Whence, $i_{b,1}>b+1$ and we have two crossing open intervals $(b',b+1)$ and $(b,i_{b,1})$ for $F$, which is another contradiction.

        \item \label{no-higher-deg-2} When $k_b=2$, we observe first that $(b,i_{b,1})\in F\cap F'$. To see this, notice that if $(b,i_{b,1})\in F\setminus F'$, then $(b,i_{b,1})$ is not a common leaf. Whence, $(b,i_{b,1})\in LG_F$ by our description in subsection \ref{Linear generators}. Now, we have $(b,i_{b,1})\in LG_F
            \cap (F\setminus F')$, contradicting the assumption in \eqref{eqn:high-deg}.

            Therefore, by the minimality of $b$ and the fact that $\widehat{\bdT^{F'}} >_{\lex} \widehat{\bdT^{F}}$, we have two subcases: either $(b,i')\in F_b'$ for some $i'>i_{b,2}$ or $F_b'=\{(b,i_{b,1})\}$.  One can likewise exclude the first subcase using the non-crossing argument. Whence, we only need to check with the  $F_b'=\{(b,i_{b,1})\}$ case. In $\calT_{F'}$, the parent node of $(b,i_{b,1})$ must take the form of $(b',i_{b,1})$ for some $b'<b$. By the minimally of $b$, $(b',i_{b,1})$ belongs to $F\cap F'$ and will consequently be the parent node of $(b,i_{b,1})$ in both $\calT_F$ and $\calT_{F'}$. But the parent node of $(b,i_{b,1})$ in $\calT_F$ is $(b,i_{b,2})$, still a contradiction.

        \item Assume now that $k_b\ge 3$. If $(b,i_{b,j})\in F_b'$ for all $1\le j\le k_b-1$, then we can argue as in the previous case \ref{no-higher-deg-2}. Thus, we have some $j$ with $1\le j\le k_b-1$ such that $(b,i_{b,j})\notin F_b'$. Let $j_0$ be the smallest with this property. Notice that
            \begin{itemize}
                \item we always have $(b,i_{b,1})\in F'$ as in \ref{no-higher-deg-2}, and
                \item for $j$ with $2\le j\le k_b-1$, if $(b,i_{b,j})$ has a sibling or two children, then $(b,i_{b,j})\in LG_F\subset F'$ by \eqref{eqn:high-deg}.
            \end{itemize}
            Thus, $2\le j_0\le k_b-1$ and $(b,i_{b,j_0})$ has neither a right child nor a right sibling in $\calT_F$.  

            As a matter of fact, $(b,i_{b,j})$ has no right sibling in $\calT_F$ for $j_0\le j\le k_b-1$.  To see this, suppose for contradiction that there exists some $j_1>j_0$ such that $(b,i_{b,j_1})$ has a right sibling. Let $j_1$ be the smallest. Therefore,  for all $j$ with $j_0\le j< j_1$, $(b,i_{b,j})$ has no right sibling. Notice that $(b,i_{b,j_0-1})$ also has no right sibling. Therefore, $(i_{b,j_0-1},i_{b,j_1})$ contains no leaf of $\calT_F$ and $i_{b,j}=i_{b,j_0-1}+j-(j_{0}-1)$  for $j_0\le j\le j_1$ by \Cref{rmk-key-facts} \ref{rmk-key-facts-2}. 
            Since $F,F'\in \calG_{\alpha}$, it follows that $(i_{b,j_0-1},i_{b,j_1})$ contains no leaf of $\calT_{F'}$.
            Meanwhile, as $(b,i_{b,j_1})$ has a right sibling in $\calT_F$, $(b,i_{b,j_1})\in LG_F\subset F'$ as well. Since we have assumed that $(b,i_{b,j_0-1})\in F'$, this forces $(b,i_{b,j})\in F'$ for $j_0-1<j\le j_1$ by \Cref{rmk-key-facts} \ref{rmk-key-facts-4}, contradicting the choice of $j_0$. 

            Now, $(b,i_{b,j})$ has no right sibling in $\calT_F$ for $j_0-1\le j\le k_b-1$. It follows from 
            \Cref{rmk-key-facts} \ref{rmk-key-facts-2} that
            the interval $(i_{b,j_0-1},i_{b,k_b})$ has no leaf of $\calT_F$, and
            \begin{equation}
                i_{b,j}=i_{b,j_0-1}+j-(j_0-1)\qquad \text{for}\quad j_0-1\le j\le k_b.
                \label{eqn:b-no-tail-eqn}
            \end{equation}
           
            Furthermore, as $(b,i_{b,j_0-1})\in F'$ while $(b,i_{b,j_0})\notin F'$, we have 
            \begin{equation}
                (b,i_{b,j})\in F\setminus F'\qquad \text{for}\quad j_0\le j\le k_b
                \label{eqn:b-no-tail}
            \end{equation}
            by \Cref{rmk-key-facts} \ref{rmk-key-facts-4}.

            So far, $(b,i_{b,1})\in F\cap F'$ and $(b,i_{b,j})\in F_b'$ for all $j=2,3,\dots,j_0-1$ by the minimality of $j_0$ . Therefore, we have
            \begin{equation}
                \Set{(b,i_{b,j})|1\le j\le j_0-1} \subseteq F_b'.
                \label{eqn:contain-equal}
            \end{equation}
            If the containment in \eqref{eqn:contain-equal} is strict, then we have some $(b,i')\in F_b'\setminus F_b$. Since $F'\succ F$, $i'>i_{b,j_0}$. And by \eqref{eqn:b-no-tail-eqn} with \eqref{eqn:b-no-tail}, we must have $i'>i_{b,k_b}$. If $b=1$, then $(b,i_{b,k_b})$ is the common root $(1,c)$. The existence of such an $i'$ is impossible. Therefore, $b>1$ and the parent node of $(b,i_{b,k_b})$ in $\calT_F$ must have the form $(b',i_{b,k_b})$ for some $b'<b$.
            By the minimality of $b$, $(b',i_{b,k_b})\in F'$.
            Whence, we will arrive at a contradiction due to crossing. Therefore, the containment in \eqref{eqn:contain-equal} is actually an equality:
            \[
                \Set{(b,i_{b,j})|1\le j\le j_0-1} = F_b'.
            \]
            Now, we consider the parent node of $(b,i_{k,j_0-1})$ in $\calT_{F'}$ and get a similar contradiction due to crossing. And this completes the proof in this subsection.
    \end{enumerate}

    In summary, the ideal $(\ini(P))^{\vee}$ has linear quotients, as expected.
\end{proof}

\begin{Corollary}
    When $c\ge d+4$, the initial complex $\Delta$ is shellable.
\end{Corollary}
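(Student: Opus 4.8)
The plan is to read this off directly from \Cref{prop:linear-quotients} via Alexander duality, with no new combinatorics needed. Recall that $\ini(P)$ is by construction the Stanley--Reisner ideal $I_{\Delta}$ of the initial complex $\Delta$, so that $(\ini(P))^{\vee}=I_{\Delta^{\vee}}$ is the Stanley--Reisner ideal of the Alexander dual complex $\Delta^{\vee}$. There is a well-known refinement of the Eagon--Reiner theorem, due to Herzog, Hibi and Zheng, characterizing shellability via duality: a simplicial complex $\Delta$ is (nonpure) shellable if and only if the Stanley--Reisner ideal $I_{\Delta^{\vee}}$ of its Alexander dual has linear quotients (see \cite[Theorem 8.2.5]{MR2724673}). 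Since \Cref{prop:linear-quotients} establishes precisely that $(\ini(P))^{\vee}$ has linear quotients --- with respect to the ordering $\succ$ of \Cref{leafOrder} --- we conclude at once that $\Delta$ is shellable. Because all facets of $\Delta$ share the common cardinality $c+d$ by \Cref{lem-facets}, this is in fact a pure shelling.

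The only point deserving a line of care is the direction of the shelling order. Under the bijection $F\leftrightarrow \widehat{\bdT^{F}}$ between facets of $\Delta$ and minimal generators of $(\ini(P))^{\vee}$ from \cite[Lemma 1.5.3]{MR2724673}, the colon ideals analyzed in \Cref{prop:linear-quotients} are $\langle \widehat{\bdT^{F'}}\mid F'\succ F\rangle:\widehat{\bdT^{F}}$, i.e.\ each generator is compared against the $\succ$-larger ones; hence the linear-quotients order of the generators, and the corresponding shelling order of the facets of $\Delta$, is the $\succ$-decreasing enumeration of the facets. One should simply record this enumeration explicitly when invoking the characterization, so that no reversal ambiguity arises.

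I do not expect any genuine obstacle here: all the substance was already carried out in \Cref{prop:linear-quotients}, and this corollary is a formal consequence of it. If one wished to avoid quoting the duality--shellability theorem, one could instead observe that linear quotients endow $(\ini(P))^{\vee}$ with a linear resolution by \cite[Proposition 8.2.1]{MR2724673}, whence $\Delta$ is Cohen--Macaulay by \cite[Theorem 8.1.9]{MR2724673}; but that recovers only Cohen--Macaulayness, so for the sharper conclusion one either cites the Herzog--Hibi--Zheng result or, equivalently, unwinds the linear-quotients data of \Cref{prop:linear-quotients} into an explicit shelling of $\Delta$ listing its facets in $\succ$-decreasing order --- which is routine given the explicit description of the successive colon ideals obtained there.
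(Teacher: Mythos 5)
Your argument is correct and coincides with the paper's own proof, which likewise deduces shellability of $\Delta$ directly from \Cref{prop:linear-quotients} together with the Herzog--Hibi--Zheng characterization (cited in the paper as \cite[Proposition 8.2.5]{MR2724673}) that a complex is shellable if and only if the Stanley--Reisner ideal of its Alexander dual has linear quotients. The extra remarks on the $\succ$-decreasing enumeration and purity are fine but not needed.
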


\begin{proof}
    It follows from \cite[Proposition 8.2.5]{MR2724673} and \Cref{prop:linear-quotients}.
\end{proof}

%\section{Regularity of fiber cones of rational normal scrolls}
\section{Applications of the Cohen--Macaulay property}
% \label{reg-scroll}

In this final section, $I_{n_1,\dots,n_d}$ is still the defining ideal of the rational normal scroll $\calS_{n_1,\dots,n_d}\subseteq \PP^{c+d-1}$ for $c=n_1+\cdots+n_d$. 

As the first application to our main result \Cref{thm:CM-fiber-cone}, 
%As the main result of this section, 
we compute the Castelnuovo--Mumford regularity of the fiber cone $\calF(I_{n_1,\dots,n_d})$ in \Cref{thm:reg-2}. Its argument depends on the Cohen--Macaulayness established in section 2 and the regularity result of balanced cases in \cite{Lin-Shen3}. Again, by the Cohen--Macaulayness, the regularity of the fiber cone can be related to its {$\bda$-invariant}, as well as the reduction number of the ideal $I_{n_1,\dots,n_d}$.

%As a quick reminder, 
It is worth mentioning that, 
\cite[Section 20.5]{MR1322960} provides a concise introduction to the notion of \emph{Castelnuovo--Mumford regularity}, as well as some historical notes. And the \emph{$\bda$-invariant} was introduced by Goto and Watanabe in \cite[Definition 3.1.4]{MR494707}. A nice thing is that, for any standard graded Cohen--Macaulay algebra $A$ over $\KK$, one has
\begin{equation}
    \bda(A)=\reg(A)-\dim(A),
    \label{def:a-inv}
\end{equation}
in view of the equivalent definition of regularity in \cite[Definitions 1 and 3]{MR676563}. As for the \emph{reduction number} of an ideal and some related topics, one may wish to consult \cite[Section 8.2]{MR2266432}.  Since these three invariants have been widely studied in the commutative algebra literature, we will not bother repeating their definitions here. 
    
%We need the following observation. We end this section with a quick application. It is also due to the following fact.
%However, before stating the main result of this section, 
However, before stating the concrete result and displaying its proof,
we still need to include three important facts here for completeness.

\begin{Lemma}
    [{\cite[Proposition 7.43]{MR2508056}}]
    \label{lem:reg-deg}
    Let $M$ be a graded Cohen--Macaulay module over the polynomial ring $\KK[x_1,\dots,x_n]$, 
    and $P_M(t)$ the numerator Laurent polynomial of the Hilbert series of $M$. Then $\reg(M)=\deg(P_M(t))$.
\end{Lemma}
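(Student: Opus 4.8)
\emph{Proof strategy.} The idea is to cut $M$ down to an Artinian module by a linear regular sequence, observing that both $\reg$ and the numerator of the Hilbert series are carried along unchanged, and then to settle the Artinian case directly. First I would reduce to the case of an infinite field: the graded Betti numbers and the Hilbert function of $M$ are both unaffected by the flat base change $M\mapsto M\otimes_{\KK}\KK'$ along a field extension $\KK\subseteq\KK'$, so $\reg(M)$ and $P_M(t)$ do not change, and we may assume $\KK$ infinite. Since $M$ is Cohen--Macaulay, $\depth M=\dim M=:q$, and over an infinite field there is a regular sequence $\ell_1,\dots,\ell_q$ on $M$ consisting of linear forms (general linear forms avoid the finitely many graded associated primes occurring at each stage, none of which is $\mm$ because the depth stays positive). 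Put $N=M/(\ell_1,\dots,\ell_q)M$, a graded module of finite length.

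Two observations then finish the argument. \textbf{(A)} For a linear form $\ell$ that is a nonzerodivisor on a graded module $L$, one has $\reg(L/\ell L)=\reg(L)$: apply the standard regularity inequalities for a short exact sequence to $0\to L(-1)\xrightarrow{\ell}L\to L/\ell L\to 0$, obtaining $\reg(L/\ell L)\le\max\{\reg(L),\reg(L(-1))-1\}=\reg(L)$ from the ``quotient'' inequality, and $\reg(L)+1=\reg(L(-1))\le\max\{\reg(L),\reg(L/\ell L)+1\}$, which forces $\reg(L)\le\reg(L/\ell L)$, from the ``sub'' inequality. Iterating along $\ell_1,\dots,\ell_q$ gives $\reg(N)=\reg(M)$. \textbf{(B)} The Hilbert series satisfy $H_N(t)=(1-t)^q H_M(t)$; since $\dim N=0$, the numerator Laurent polynomial of $N$ is $H_N(t)$ itself, so $P_N(t)=(1-t)^q H_M(t)=P_M(t)$ — here I use that ``the'' numerator of $M$ is $(1-t)^{\dim M}H_M(t)$, with no spurious factor of $(1-t)$ absorbed, which holds because $P_M(1)=e(M)\ne 0$ for a module of dimension exactly $\dim M$. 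In particular $\deg P_N(t)=\deg P_M(t)$.

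It remains to treat the Artinian module $N$. Being $\mm$-torsion of finite length, it satisfies $H^0_{\mm}(N)=N$ and $H^i_{\mm}(N)=0$ for $i>0$ by Grothendieck vanishing; the local-cohomology formula for regularity (equivalent to the Betti-number definition) then gives $\reg(N)=\max\{i+j:H^i_{\mm}(N)_j\ne 0\}=\max\{j:N_j\ne 0\}=\deg H_N(t)=\deg P_N(t)$. Chaining the equalities, $\reg(M)=\reg(N)=\deg P_N(t)=\deg P_M(t)$, as claimed.

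The step most in need of care is \textbf{(A)}, the invariance of regularity under quotient by a linear nonzerodivisor, together with the Artinian base case, since both rest on the local-cohomology (equivalently, minimal graded free resolution) characterization of $\reg$; everything else is bookkeeping about Hilbert series and about the convention fixing the numerator polynomial. If one wished to avoid local cohomology entirely, both \textbf{(A)} and the base case can instead be extracted from the minimal graded free resolution via mapping cones, but that is the longer road.
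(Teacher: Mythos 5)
Your proof is correct. Note, however, that the paper offers no proof of this statement at all: it is quoted verbatim as a known result, with the citation to \cite{MR2508056}*{Proposition 7.43} serving in place of an argument, so there is no internal proof to compare yours against. What you have written is the standard self-contained derivation: base change to an infinite field, cut down by a linear $M$-regular sequence of length $\dim M$ (which exists by prime avoidance since the successive quotients remain Cohen--Macaulay of positive depth), observe that $\reg$ is preserved by the two short-exact-sequence inequalities applied to $0\to L(-1)\xrightarrow{\ \ell\ } L\to L/\ell L\to 0$ while the Hilbert-series numerator $P_M(t)=(1-t)^{\dim M}H_M(t)$ is preserved because $P_M(1)=e(M)\neq 0$, and finish in the Artinian case where $\reg(N)=\max\{j: N_j\neq 0\}=\deg H_N(t)$ by Grothendieck vanishing. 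All steps check out, including the two regularity inequalities in \textbf{(A)} and the convention-checking in \textbf{(B)}; this is essentially the argument one finds in the cited reference.
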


\begin{Lemma}
    [{\cite[Proposition 6.6]{CNPY} or \cite[Proposition 1.2]{MR3864202}}]
    \label{CNPY:6.6}
    Let $I \subset R = \KK[x_1,\dots,x_N]$ be a homogeneous ideal that is generated in one degree, say $d$. Assume that the fiber cone $\calF(I)$ is Cohen--Macaulay. Then each minimal reduction of $I$ is generated by $\dim(\calF(I))$ homogeneous polynomials of degree $d$, and $I$ has the reduction number $\r(I) = \reg(\calF(I))$.
\end{Lemma}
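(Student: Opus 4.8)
The plan is to deduce \Cref{CNPY:6.6} from the standard dictionary between reductions of an equigenerated homogeneous ideal and linear systems of parameters of its fiber cone, combined with \Cref{lem:reg-deg}. First I would reduce to the case of an infinite residue field: base change along the faithfully flat extension $\KK\to\KK(\bft)$ (with $\bft$ indeterminates) preserves Cohen--Macaulayness of $\calF(I)$, leaves $\dim\calF(I)$ and $\reg\calF(I)$ unchanged, and is compatible with forming minimal reductions and reduction numbers. So assume $\KK$ is infinite and set $\ell\coloneqq\dim\calF(I)=\ell(I)$, the analytic spread. Since $I$ is generated in the single degree $d$, $\calF(I)=\calR(I)/\fm\calR(I)=\bigoplus_{n\ge0}I^n/\fm I^n$ is a standard graded $\KK$-algebra generated in degree one by the images of a minimal generating set of $I$, where $\fm$ is the graded maximal ideal of $R$; note also that $I_n=R_{n-d}I_d\subseteq\fm I$ for every $n>d$.

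Next I would set up the dictionary. For $g_1,\dots,g_m\in I_d$ with images $\bar g_1,\dots,\bar g_m\in\calF(I)_1$ and $J=(g_1,\dots,g_m)$, a degree-by-degree computation gives $(\calF(I)/J\calF(I))_n=I^n/\bigl(JI^{n-1}+\fm I^n\bigr)$, so by graded Nakayama this vanishes for $n\gg0$ if and only if $JI^{n-1}=I^n$ for $n\gg0$, i.e.\ if and only if $J$ is a reduction of $I$; in that case $J\calF(I)$ is $\calF(I)_+$-primary. Hence $J$ is a reduction of $I$ exactly when $\bar g_1,\dots,\bar g_m$ generate a $\calF(I)_+$-primary ideal of $\calF(I)$, which (as $\KK$ is infinite) happens for $m=\ell$ sufficiently general $\KK$-linear combinations of the generators of $I$, and cannot happen for $m<\ell$. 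Invoking the classical fact of Northcott and Rees that a minimal reduction of $I$ is homogeneous --- whence, by $I_n\subseteq\fm I$ for $n>d$, it is generated by forms of degree $d$ --- together with the prime-avoidance argument just made, every minimal reduction of $I$ is generated by exactly $\dim\calF(I)$ forms of degree $d$. This establishes the first assertion.

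Now fix a minimal reduction $J=(g_1,\dots,g_\ell)$, so that $\bar g_1,\dots,\bar g_\ell$ is a linear system of parameters of $\calF(I)$. Since $\calF(I)$ is Cohen--Macaulay, this l.s.o.p.\ is a regular sequence, and therefore $H_{\calF(I)/J\calF(I)}(t)=(1-t)^\ell H_{\calF(I)}(t)$. Writing $H_{\calF(I)}(t)=Q(t)/(1-t)^\ell$ with $Q$ the numerator polynomial, \Cref{lem:reg-deg} (applicable because $\calF(I)$ is Cohen--Macaulay, hence a Cohen--Macaulay module over the polynomial ring it is a quotient of) gives $\deg Q=\reg\calF(I)$, and the displayed identity shows $H_{\calF(I)/J\calF(I)}(t)=Q(t)$ is a polynomial. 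On the other hand $\calF(I)/J\calF(I)=\bigoplus_n I^n/(JI^{n-1}+\fm I^n)$ is an Artinian standard graded $\KK$-algebra, so $\deg Q$ is its top nonvanishing degree; by graded Nakayama, $I^n/(JI^{n-1}+\fm I^n)\ne0$ precisely when $JI^{n-1}\ne I^n$, i.e.\ precisely when $n\le r_J(I)$. Hence $r_J(I)=\deg Q=\reg\calF(I)$ for \emph{every} minimal reduction $J$, and in particular $\r(I)=\min_J r_J(I)=\reg\calF(I)$.

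The computationally routine steps are the base change to an infinite field, the identification $(\calF(I)/J\calF(I))_n=I^n/(JI^{n-1}+\fm I^n)$, and the Nakayama bookkeeping. The point where the Cohen--Macaulay hypothesis is indispensable --- and the main obstacle --- is the passage from the Hilbert-series identity to the conclusion that the numerator degree equals \emph{both} $\reg\calF(I)$ and $r_J(I)$: this is exactly what forces $r_J(I)$ to be independent of the chosen minimal reduction. Without Cohen--Macaulayness the l.s.o.p.\ need not be a regular sequence, the Hilbert-series factorization fails, and one obtains only the inequality $r_J(I)\ge\reg\calF(I)$, possibly strict and possibly varying with $J$.
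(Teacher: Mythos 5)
The paper offers no proof of \Cref{CNPY:6.6} at all: it is imported verbatim from the cited references, so there is nothing internal to compare your argument against. That said, your proof is the standard one and, as far as I can tell, it is essentially the argument behind the quoted propositions: identify $\calF(I)/J\calF(I)$ with $\bigoplus_n I^n/(JI^{n-1}+\fm I^n)$, observe that $J$ is a reduction precisely when this is Artinian, use Cohen--Macaulayness to make the images of the generators of a minimal reduction a regular sequence, and read off both $r_J(I)$ (top nonvanishing degree, via graded Nakayama) and $\reg\calF(I)$ (via \Cref{lem:reg-deg}) from the numerator polynomial $Q(t)$ of the Hilbert series. The core computation is correct, and you rightly isolate where Cohen--Macaulayness is used: it is what makes $\deg Q$ equal to $r_J(I)$ for \emph{every} minimal reduction $J$, hence independent of $J$.

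Two points deserve more care. First, the assertion that ``a minimal reduction of $I$ is homogeneous'' is not a theorem of Northcott--Rees, and as stated it is delicate: for a non-homogeneous $J$ the module $I^n/JI^{n-1}$ is not graded, so graded Nakayama is unavailable and $\fm$ is not in the Jacobson radical of $R$, so ordinary Nakayama fails too. The honest route is either to adopt the (standard, and surely intended in the cited sources) convention that reductions of a graded ideal are taken homogeneous, or to localize at $\fm$; once $J$ is homogeneous, your degree argument using $I_n\subseteq\fm I$ for $n>d$ correctly shows $J$ is generated in degree $d$, and the prime-avoidance count gives exactly $\ell=\dim\calF(I)$ generators. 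Second, the reduction to an infinite field is asserted rather than argued; the statement ``each minimal reduction of $I$ is generated by $\ell$ elements'' genuinely requires $|\KK|=\infty$ (or the graded-in-degree-$d$ convention), and descending it from $\KK(\bft)$ back to $\KK$ is not automatic. Neither issue affects the application in the paper, but both should be flagged or fixed if you intend this as a self-contained proof rather than a gloss on the citation.
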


Recall that the rational normal scroll $\calS_{n_1,\dots,n_d}$ is \emph{balanced} precisely when $|n_{i}-n_{j}|\le 1$ for all $i,j$. Whence, we can rearrange the columns of the matrix $\bfX$ in the equation \eqref{eqn:matrix-X} and rewrite it as the \emph{Hankel matrix} $\bdH_{2,c,d}$ for $c=n_1+\cdots+n_d$. And here is the last piece that we need for the regularity result.

\begin{Lemma}
    [{\cite[Theorem 3.1]{Lin-Shen3}}]
    \label{thm:main-reg-balanced}
    Let $I_2(\bdH_{2,c,d})$ be the defining ideal of the rational normal scroll $\calS_{n_1,\dots,n_d}\subseteq \PP^{c+d-1}$ in the balanced case. Then, the Castelnuovo--Mumford regularity of the fiber cone $\calF(I_2(\bdH_{2,c,d}))$ is
    \[
        \reg(\calF(I_2(\bdH_{2,c,d})))=
        \begin{cases}
            \ceil{(c+d-1)/2}, & \text{if $4+d\le c$},\\
            c-3, & \text{if $2< c \le 4+d$}.
        \end{cases}
    \]
\end{Lemma}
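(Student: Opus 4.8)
The plan is to turn the regularity into the degree of an $h$-polynomial and then compute that degree. Write $\calF\coloneqq\calF(I_2(\bdH_{2,c,d}))$. Since the scroll is balanced, $\calF$ is a Cohen--Macaulay normal domain by Conca, Herzog and Valla \cite{Sagbi} (or by \Cref{thm:CM-fiber-cone}); hence, by \Cref{lem:reg-deg}, $\reg(\calF)=\deg P_{\calF}(t)$, where $P_{\calF}(t)$ is the numerator of the Hilbert series of $\calF$. So it suffices to compute $\deg P_{\calF}(t)$, and I would split according to the two ranges in the statement.

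On the range $2<c<d+4$ the fiber cone is, by \cite[Remark~3.15]{MR4068250}, the homogeneous coordinate ring of the Grassmannian $\mathbb{G}(1,c-1)$ in the Pl\"ucker embedding, whose $h$-vector is the Narayana vector $\bigl(h_k\bigr)_{0\le k\le c-3}$ with $h_k=\frac{1}{c-2}\binom{c-2}{k}\binom{c-2}{k+1}$; hence $\deg P_{\calF}(t)=c-3$. The single borderline value $c=d+4$, which lies in $2<c\le d+4$ but is no longer a Grassmannian, I would settle by a direct computation of $\ini(P)$ and its $h$-polynomial (using that the facets of $\Delta$ have $c+d$ vertices by \Cref{lem-facets}), which should again have degree $c-3=d+1$.

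For $c\ge d+4$ the argument is combinatorial. By the proof of \Cref{thm:CM-fiber-cone}, $\ini(P)$ is the Stanley--Reisner ideal of the initial complex $\Delta$ and is Cohen--Macaulay, so $\calF=S/P$ and $\KK[\Delta]=S/\ini(P)$ share the same Hilbert series and both are Cohen--Macaulay; thus $\reg(\calF)=\reg(\KK[\Delta])=\deg P_{\calF}(t)$. Moreover $\Delta$ is shellable by \Cref{prop:linear-quotients} and its corollary, and in the corresponding shelling the restriction face of a facet $F$ is precisely the set $LG_F$ of linear colon generators described inside the proof of \Cref{prop:linear-quotients}; therefore $h_i(\Delta)=\#\{F : |LG_F|=i\}$ and $\deg P_{\calF}(t)=\max_F|LG_F|$. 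It remains to evaluate this maximum. Using the binary-tree description of facets in \Cref{lem-facets} for a balanced $\bdn$, one reads off from the rules in subsection~\ref{Linear generators} which nodes of $\calT_F$ enter $LG_F$, and one shows that $|LG_F|$ is largest for facets whose trees branch in a maximally ``alternating'' way between the two windows of \eqref{eqn:def-ell-alpha}; the task then is to exhibit one facet with $|LG_F|=\lceil(c+d-1)/2\rceil$ and to prove that no facet exceeds this, which yields $\reg(\calF)=\lceil(c+d-1)/2\rceil$.

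The step I expect to be the real obstacle is this last one: determining $\max_F|LG_F|$ exactly, and especially the uniform upper bound $|LG_F|\le\lceil(c+d-1)/2\rceil$. This is a delicate optimization over the admissible binary trees, of the same flavor as --- but finer than --- the case analysis in the proof of \Cref{prop:linear-quotients}; in particular one must control simultaneously the contributions of the ``left window'', the ``right window'', and the unary chain joining them. A way to organize or double-check the answer is to compute the canonical module: as $\calF$ is Cohen--Macaulay, $\reg(\calF)=\dim(\calF)+\bda(\calF)$ with $\dim(\calF)=c+d$ by the purity of $\Delta$, and $\bda(\calF)=-\min\{i:(\omega_{\calF})_i\neq0\}$ can be read from $\omega_{\KK[\Delta]}$ via the interior faces of $\Delta$; identifying the ``deepest'' interior faces should reproduce $\lceil(c+d-1)/2\rceil$ and make the dichotomy at $c=d+4$ transparent.
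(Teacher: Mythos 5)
First, a structural point: the paper does not prove this lemma at all --- it is imported verbatim from \cite{Lin-Shen3} as an external input --- so there is no internal argument to compare yours against. Judged on its own, your proposal is a sensible strategy outline rather than a proof, and it has two concrete problems. The decisive one is that for $c\ge d+4$ the entire content of the statement is the evaluation of $\deg P_{\calF}(t)$, which you convert (correctly, via the Cohen--Macaulay reduction of \Cref{lem:reg-deg} and the standard correspondence between linear quotients of $(\ini(P))^{\vee}$ and shellings of $\Delta$, giving $h_i(\Delta)=\#\{F:|LG_F|=i\}$) into the optimization $\max_F|LG_F|=\lceil(c+d-1)/2\rceil$ --- and then you stop. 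Neither the construction of a facet attaining this value nor the uniform upper bound $|LG_F|\le\lceil(c+d-1)/2\rceil$ is carried out, and you yourself flag this as ``the real obstacle.'' Since everything else in the argument is routine bookkeeping, what is missing is essentially the whole theorem in the main range of parameters; the suggested cross-check via the canonical module of $\KK[\Delta]$ is the same combinatorial difficulty in different clothing.

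Second, your treatment of the borderline value $c=d+4$ is incorrect. The two branches of the displayed formula overlap at $c=d+4$ and disagree there: the first gives $\lceil(2d+3)/2\rceil=d+2$ while the second gives $c-3=d+1$. Comparing with \Cref{thm:reg-2}, where the second case is the strict inequality $2<c<4+d$, the intended reading is that $c=d+4$ belongs to the first branch, so the answer there is $d+2$; your prediction that a direct computation at $c=d+4$ ``should again have degree $c-3=d+1$'' therefore contradicts the very statement you are proving, and no computation is actually offered. (By contrast, your handling of $2<c<d+4$ via the Narayana $h$-vector of $\mathbb{G}(1,c-1)$ is correct and does yield $c-3$.)
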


It is time to describe the first result of this section.

\begin{Theorem}
    \label{thm:reg-2}
    Let $I_{n_1,\dots,n_d}$ be the defining ideal of the rational normal scroll $\calS_{n_1,\dots,n_d}\subseteq \PP^{c+d-1}$ for $c=n_1+\cdots+n_d$. 
    \begin{enumerate}[i]
        \item \label{thm:reg-2-1}
        The Castelnuovo--Mumford regularity of the fiber cone is given by
    \[
        \reg(\calF(I_{n_1,\dots,n_d}))=
        \begin{cases}
            \ceil{(c+d-1)/2}, & \text{if $4+d \le c$},\\
            c-3,& \text{if $2< c<4+d$}.
        \end{cases}
    \]
    \item \label{thm:reg-2-2}
    The reduction number of $I_{n_1,\dots,n_d}$ is given by
    \[
        \r(I_{n_1,\dots,n_d})=
        \begin{cases}
            \ceil{(c+d-1)/2}, & \text{if $4+d \le c$},\\
            c-3,& \text{if $2< c<4+d$}.
        \end{cases}
    \]
    \item \label{thm:reg-2-3}
    The $\bda$-invariant of the fiber cone is given by
     \[
        \bda(\calF(I_{n_1,\dots,n_d}))=
        \begin{cases}
            \ceil{(c+d-1)/2}-c-d, & \text{if $4+d \le c$},\\
            -c,& \text{if $2< c<4+d$}.
        \end{cases}
    \]   
    \end{enumerate}
\end{Theorem}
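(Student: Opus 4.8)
The plan is to deduce (ii) and (iii) from (i), and to prove (i) by comparison with the balanced case. For (ii): the ideal $I_{n_1,\dots,n_d}$ is generated in the single degree $2$, and $\calF(I_{n_1,\dots,n_d})$ is Cohen--Macaulay by \Cref{thm:CM-fiber-cone}, so \Cref{CNPY:6.6} applies and gives $\r(I_{n_1,\dots,n_d})=\reg(\calF(I_{n_1,\dots,n_d}))$; hence (ii) is immediate from (i). For (iii): by \eqref{def:a-inv} it suffices to know $\dim\calF(I_{n_1,\dots,n_d})$. When $c\ge d+4$, every facet of the initial complex $\Delta$ has exactly $c+d$ vertices by \Cref{lem-facets}, so $\KK[\Delta]=S/\ini(P)$ has Krull dimension $c+d$, and therefore so does $\calF(I_{n_1,\dots,n_d})=S/P$, the two rings having the same Hilbert function. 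When $c<d+4$, $\calF(I_{n_1,\dots,n_d})$ is the homogeneous coordinate ring of $\mathbb{G}(1,c-1)$, of Krull dimension $2(c-2)+1=2c-3$. Substituting the values of $\reg$ from (i) into the identity \eqref{def:a-inv} then yields the formulas in (iii).

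For (i) I treat the two ranges separately. If $2<c<d+4$, then $\calF(I_{n_1,\dots,n_d})$ is the homogeneous coordinate ring of the Grassmannian $\mathbb{G}(1,c-1)$ by the proof of \Cref{thm:CM-fiber-cone}; the very same description applies to the balanced scroll $\calS_{1,\dots,1}$ with $c$ parts, for which \Cref{thm:main-reg-balanced} gives $\reg=c-3$, hence $\reg(\calF(I_{n_1,\dots,n_d}))=c-3$ as well. Now assume $c\ge d+4$. For this pair $(c,d)$ the balanced scroll $\calS_{\bdn^{\mathrm{bal}}}$ with the \emph{same} invariants exists---take the partition of $c$ into $d$ parts, each equal to $\lfloor c/d\rfloor$ or $\lceil c/d\rceil$---and by \Cref{thm:main-reg-balanced} its fiber cone has $\reg=\lceil(c+d-1)/2\rceil$. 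Both $\calF(I_{n_1,\dots,n_d})$ and $\calF(I_{\bdn^{\mathrm{bal}}})$ are Cohen--Macaulay by \Cref{thm:CM-fiber-cone}, so by \Cref{lem:reg-deg} each of their regularities equals the degree of the numerator of the corresponding Hilbert series. It therefore suffices to prove that $\calF(I_{n_1,\dots,n_d})$ and $\calF(I_{\bdn^{\mathrm{bal}}})$ have the same Hilbert series.

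The crux is thus that the Hilbert series of $\calF(I_{n_1,\dots,n_d})$ depends only on $c$ and $d$. Since the Hilbert function is preserved under passing to $\ini(P)$, this is equivalent to the $f$-vectors (equivalently the $h$-vectors) of the initial complexes $\Delta_{n_1,\dots,n_d}$ being partition-independent. I would attack this through \Cref{lem-facets}: a facet is a rooted binary tree with root $(1,c)$ and leaf set one of the $\frakL_\alpha$, $\alpha\in[c-d-2]$, subject to the length conditions \ref{lem-facets-iii}--\ref{lem-facets-iv}; one observes that the node statistics of every facet of every $\Delta_{n_1,\dots,n_d}$ coincide---$d+2$ leaves, $d+1$ nodes with two children, $c-d-3$ nodes with a single child---while $\frakL_\alpha$ always consists of two runs of unit intervals separated by a single gap of length $c-d-\alpha-2$, none of which involves the partition. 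Working group by group, one then builds a dimension-preserving bijection between the faces of $\Delta_{n_1,\dots,n_d}$ lying in $\calG_\alpha$ and the faces of $\Delta_{\bdn^{\mathrm{bal}}}$ lying in $\calG_\alpha$, realigning the two runs according to the (partition-dependent) value of $\ell_\alpha$, so that the number of faces of each dimension in $\calG_\alpha$ is exhibited as a function of $c$, $d$, and $\alpha$ alone. This last bookkeeping is the main obstacle: it is a more delicate version of the binary-tree analysis already carried out for \Cref{prop:linear-quotients}, now tracking all faces rather than only the linear colon generators. A conceptually cleaner but technically thorny alternative would be to place all rational normal scrolls with fixed $(c,d)$ in a single flat family and to show that the fiber cone---or the Rees algebra---stays flat along it, forcing the Hilbert series to be constant; failing that, the combinatorial comparison above is the route I would take.
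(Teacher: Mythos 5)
Your overall architecture coincides with the paper's: deduce \ref{thm:reg-2-2} from \ref{thm:reg-2-1} via \Cref{CNPY:6.6}, deduce \ref{thm:reg-2-3} from \ref{thm:reg-2-1} via \eqref{def:a-inv} and the dimension of the fiber cone (your dimension count, via the facet cardinality $c+d$ of \Cref{lem-facets} and the Grassmannian description, is a legitimate substitute for the paper's citation of \cite[Proposition 3.1, Corollary 3.10]{MR4068250}), and reduce \ref{thm:reg-2-1} to the balanced case of \Cref{thm:main-reg-balanced} by showing the Hilbert series of $\calF(I_{n_1,\dots,n_d})$ depends only on $(c,d)$ and invoking Cohen--Macaulayness together with \Cref{lem:reg-deg}.

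The genuine gap is precisely the step you yourself flag as ``the crux'': the partition-independence of the Hilbert series is asserted but not proved. Your proposed combinatorial route --- matching $f$-vectors of the initial complexes by a group-by-group bijection of faces --- is only a plan, and as stated it has a real defect: a face of $\Delta$ (as opposed to a facet) is in general contained in facets from several different groups $\calG_\alpha$, so ``the faces lying in $\calG_\alpha$'' do not partition the face poset and the proposed bookkeeping would double count; the facet-level node statistics you record do not by themselves determine the $f$-vector. The paper does not attempt this combinatorics at all: it observes, following the proof of \cite[Theorem 3.13]{MR4068250}, that the Hilbert function of the fiber cone depends only on $c$ and $d$ by \cite[Theorem 3.7]{MR3341465}, and this single citation closes the argument. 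So your proof is incomplete exactly where the paper leans on an external result; either import that result or carry out a correct face count (e.g.\ via the $h$-vector read off from the shelling order of \Cref{leafOrder}, whose restriction sets are the linear generators you computed in \Cref{prop:linear-quotients}) before the comparison with $\bdn^{\mathrm{bal}}$ is licensed.
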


\begin{proof}
    Notice that the fiber cone $\calF(I_{n_1,\dots,n_d})$ is Cohen--Macaulay by \Cref{thm:CM-fiber-cone}. Thus, the item \ref{thm:reg-2-2} follows directly from the item \ref{thm:reg-2-1} and \Cref{CNPY:6.6}. And similarly, the item \ref{thm:reg-2-3} follows from the item \ref{thm:reg-2-1}, Equation \eqref{def:a-inv} and the fact
    \[
        \dim(\calF(I_{n_1,\dots,n_d}))=
        \begin{cases}
            c+d, & \text{if $4+d \le c$,}\\
            2c-3,& \text{if $2< c<4+d$,}
        \end{cases}
    \]
    by \cite[Proposition 3.1, Corollary 3.10]{MR4068250}.
    
    As for the item \ref{thm:reg-2-1}, just as observed in the proof of \cite[Theorem 3.13]{MR4068250}, the Hilbert function of the fiber cone of the rational normal scroll $\calS_{n_1,\dots,n_d}$ depends only on $c$ and $d$ by \cite[Theorem 3.7]{MR3341465}.
    Whence, the fiber cones of the ideal $I_{n_1,\dots,n_d}$ above and the ideal $I_2(\bdH_{2,c,d})$ in \cite[Section 3]{Lin-Shen3} have the same Laurent polynomial. Now, by the Cohen--Macaulayness of $\calF(I_{n_1,\dots,n_d})$ established in \Cref{thm:CM-fiber-cone}, these fiber cones have the same Castelnuovo--Mumford regularity by \Cref{lem:reg-deg}. 
    
    Now, it remains to apply \Cref{thm:main-reg-balanced}.
    %\cite[Theorem 4.1]{Lin-Shen3}, where we computed the regularity of the fiber cones of the rational normal scrolls and their secant varieties in the \emph{balanced case}.  
\end{proof}

As the second application to our main result \Cref{thm:CM-fiber-cone}, we characterize the Gorensteinness of the fiber cone. The corresponding numerical condition in the balanced case is the following.

\begin{Lemma}
    [{\cite[Proposition 2.10]{Lin-Shen3}}]
    \label{lem:Gor}
    Let $I_2(\bdH_{2,c,d})$ be the defining ideal of the rational normal scroll $\calS_{n_1,\dots,n_d} \subseteq \PP^{c+d-1}$ in the balanced case. Then, the fiber cone $\calF(I_2(\bdH_{2,c,d}))$ is Gorenstein if and only if $c\in \Set{2,3,2+d,3+d,4+d}$.
\end{Lemma}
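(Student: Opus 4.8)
The statement is quoted from \cite{Lin-Shen3}, so here I only describe how one proves it. Write $A\coloneqq\calF(I_2(\bdH_{2,c,d}))$ for the balanced scroll. The plan is in three steps: (1) reduce Gorensteinness of $A$ to symmetry of its $h$-vector; (2) write down the $h$-vector explicitly as a function of $c$ and $d$; (3) determine for which $c$ it is a palindrome.

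For step (1) I would use the Sagbi basis of \cite{Sagbi}: in the balanced case $A$ is a Cohen--Macaulay normal domain, and it degenerates flatly onto its initial algebra $A_0$, which is a standard graded \emph{normal} affine semigroup ring. Thus $A$ and $A_0$ have the same Hilbert series, hence the same numerator $h$-polynomial $h_A(t)=1+h_1t+\cdots+h_st^s$, with $s=\deg h_A=\reg(A)$ by \Cref{lem:reg-deg}. Now combine three standard facts: a standard graded Cohen--Macaulay Gorenstein algebra has a symmetric $h$-vector; for a standard graded normal affine semigroup ring the converse holds too, by Hibi's characterization of Gorensteinness through reflexivity of the underlying lattice polytope, so $A_0$ is Gorenstein if and only if $h_{A_0}=h_A$ is symmetric; and Cohen--Macaulay type is upper semicontinuous along a flat family with Cohen--Macaulay fibres, so Gorensteinness of the special fibre $A_0$ propagates to the general fibre $A$. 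Chaining these gives
\[
    A\ \text{is Gorenstein} \quad\Longleftrightarrow\quad h_A(t)\ \text{is a palindrome.}
\]

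For steps (2) and (3) I would invoke that the Hilbert function of the fiber cone of a rational normal scroll depends only on $c$ and $d$ (as used in the proof of \Cref{thm:reg-2}, via \cite{MR3341465}) together with the explicit combinatorial model of $A_0$ coming from \cite{Sagbi}, treating separately the two regimes $4+d\le c$ and $2<c<4+d$, where $\dim A$ is $c+d$ and $2c-3$ respectively and $\reg A$ is $\ceil{(c+d-1)/2}$ and $c-3$ respectively (already recorded in \Cref{thm:reg-2}). With the closed form of $h_A$ at hand, one checks that it is a palindrome exactly when $c\in\Set{2,3,2+d,3+d,4+d}$: for $c=2,3$ simply because $\reg A=0$ and $A$ is in fact a polynomial ring; for $c=2+d,3+d,4+d$ by a direct verification that the extremal coefficients satisfy $h_0=h_s=1$ and the remaining coefficients pair up; and for every other $c$ by exhibiting an index $i$ with $h_i\neq h_{s-i}$ — typically the outer pair, since then $h_s>1=h_0$.

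The main obstacle is step (3): obtaining the exact $h$-vector (equivalently the multiplicity and all of the $h_i$) of $A$ as a function of $c$ and $d$, and then the elementary but somewhat fiddly arithmetic that isolates precisely those five values of $c$ — with special care at the transition value $c=4+d$, where the two regimes and their two different formulas for $\dim A$ and $\reg A$ meet.
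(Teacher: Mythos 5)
The paper does not prove this lemma at all --- it is imported verbatim from \cite[Proposition 2.10]{Lin-Shen3} and used as a black box --- so there is no internal argument to compare yours against; what follows is an assessment of your sketch on its own terms. Your step (1) is correct but needlessly indirect: $\calF(I_2(\bdH_{2,c,d}))$ is a Cohen--Macaulay \emph{domain} (it is a toric ring $\KK[I]$, and Cohen--Macaulay by \cite{Sagbi}), so Stanley's criterion \cite[Corollary 4.4.6(b),(c)]{MR1251956} --- the very result this paper invokes in the proof of \Cref{thm:Gor} --- already yields ``Gorenstein $\Leftrightarrow$ symmetric $h$-vector'' in one line, with no need for the Sagbi degeneration, Hibi's reflexivity criterion, or semicontinuity of Cohen--Macaulay type.

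The genuine gap is that steps (2) and (3), which you yourself identify as the substance of the argument, are never carried out: you do not produce the $h$-vector of $\calF(I_2(\bdH_{2,c,d}))$ as a function of $c$ and $d$, nor the palindrome verification, so what you have is a roadmap rather than a proof. Your case analysis in step (3) also needs repair. In the regime $2<c<4+d$ the fiber cone is the Pl\"ucker coordinate ring of $\mathbb{G}(1,c-1)$ (see \cite[Remark 3.15]{MR4068250}, as used in the proof of \Cref{thm:CM-fiber-cone}), which is arithmetically Gorenstein for \emph{every} $c$; hence your claim that ``every other $c$'' admits an index with $h_i\neq h_{s-i}$ cannot be right as stated, and you should check carefully how the range $d\le c<4+d$ squares with the list $\Set{2,3,2+d,3+d,4+d}$ (the values $c=d,\,d+1$ with $d\ge 3$ deserve particular attention, and may point to hypotheses in \cite{Lin-Shen3} that your sketch ignores). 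The entire nontrivial content of the lemma is concentrated in the range $c\ge 4+d$, where one must show the $h$-vector is symmetric precisely for $c=4+d$; that is exactly the part that requires the explicit $h$-vector (e.g.\ via the lattice-point description of the initial algebra in \cite{Sagbi}), and it is the part you defer.
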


And here is the last result that this paper wants to describe.

\begin{Theorem}
    \label{thm:Gor}
    Let $I_{n_1,\dots,n_d}$ be the defining ideal of the rational normal scroll $\calS_{n_1,\dots,n_d}\subseteq \PP^{c+d-1}$ for $c=n_1+\cdots+n_d$. Then, the fiber cone $\calF(I_{n_1,\dots,n_d})$ is Gorenstein if and only if $c\in \Set{2,3,2+d,3+d,4+d}$.
\end{Theorem}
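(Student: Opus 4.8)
The plan is to bootstrap from the balanced case recorded in \Cref{lem:Gor}, exploiting two facts already in hand: the Cohen--Macaulayness of the fiber cone from \Cref{thm:CM-fiber-cone}, and the fact (used in the proof of \Cref{thm:reg-2}, via \cite[Theorem 3.7]{MR3341465}) that the Hilbert series of $\calF(I_{n_1,\dots,n_d})$ depends only on $c$ and $d$. The extra ingredient needed is Stanley's criterion: a Cohen--Macaulay standard graded $\KK$-algebra which is an \emph{integral domain} is Gorenstein if and only if its $h$-vector is palindromic, i.e., the numerator $P(t)$ of its Hilbert series satisfies $P(t)=t^{\deg P}P(1/t)$ (see, e.g., \cite[Chapter 21]{MR1322960}).

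First I would observe that $A\coloneqq\calF(I_{n_1,\dots,n_d})=\KK[I]\subseteq R$ is a domain, being a subalgebra of a polynomial ring, that it is standard graded, and that it is Cohen--Macaulay by \Cref{thm:CM-fiber-cone}; the same three properties hold for the balanced fiber cone $B\coloneqq\calF(I_2(\bdH_{2,c,d}))$ attached to the same pair $(c,d)$. Next I would invoke \cite[Theorem 3.7]{MR3341465} exactly as in the proof of \Cref{thm:reg-2} to conclude that $A$ and $B$ have the same Hilbert series, hence the same $h$-polynomial $P(t)$, and in particular that $P(t)$ is palindromic for $A$ if and only if it is palindromic for $B$. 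Applying Stanley's criterion to each of the Cohen--Macaulay domains $A$ and $B$ then yields the chain of equivalences
\[
    A\text{ is Gorenstein}\iff P(t)\text{ is palindromic}\iff B\text{ is Gorenstein},
\]
and \Cref{lem:Gor} identifies the last condition as $c\in\{2,3,2+d,3+d,4+d\}$, which completes the argument.

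The proof is short precisely because the two hard inputs --- Cohen--Macaulayness and the combinatorial control of the Hilbert function --- are already available. The one point to handle with care is the appeal to Stanley's theorem: palindromicity of the $h$-vector characterizes the Gorenstein property only under the domain hypothesis (it can fail for general Cohen--Macaulay graded algebras), so it is essential here that both fiber cones are realized as subalgebras of polynomial rings. A secondary, routine item is to make sure the degenerate small-$c$ cases (where $\bfX$ has few columns) are already subsumed by \Cref{lem:Gor}, so that no separate treatment is required.
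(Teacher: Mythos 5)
Your proposal is correct and follows essentially the same route as the paper: Cohen--Macaulayness from \Cref{thm:CM-fiber-cone}, the domain property of $\KK[I]$, the $(c,d)$-dependence of the Hilbert series via \cite[Theorem 3.7]{MR3341465}, and Stanley's symmetric-$h$-vector criterion for Gorensteinness of Cohen--Macaulay graded domains (the paper cites it as \cite[Corollary 4.4.6]{MR1251956}), followed by \Cref{lem:Gor}. Your cautionary remark that palindromicity characterizes Gorensteinness only under the domain hypothesis is exactly the point the paper's proof also takes care of.
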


\begin{proof}
    The fiber cone $\calF(I_{n_1,\dots,n_d})$ is Cohen--Macaulay by \Cref{thm:CM-fiber-cone}. And since it is isomorphic to the toric ring $\KK[I_{n_1,\dots,n_d}]$, it is an integral domain.
    As mentioned in the proof of \Cref{thm:reg-2}, the fiber cones $\calF(I_{n_1,\dots,n_d})$ and $\calF(I_2(\bdH_{2,c,d}))$ have identical Hilbert series. Thus, by both (b) and (c) of \cite[Corollary 4.4.6]{MR1251956}, $\calF(I_{n_1,\dots,n_d})$ is Gorenstein, if and only $\calF(I_2(\bdH_{2,c,d}))$ is so. Now, it suffices to apply \Cref{lem:Gor}.
\end{proof}

%\begin{Remark}
%    For the ideal $I=I_2(\bdH_{2,c,d})$ stated above, when $c=2+d$, $3+d$ or $4+d$, the paper \cite{arXiv:1901.01561} of Dinu showed that the corresponding fiber cone $\calF(\ini(I))$ is Gorenstein. And in the same paper, he also computed some geometric invariant $\delta$ of the associated integral convex polytope. Notice that in these Gorenstein cases, this integer $\delta$ is simply $-\bda(\calF(\ini(I)))$ by \cite[Proposition 2.2]{Noma}. Whence, one can also get the regularity for free.
%\end{Remark}

\begin{acknowledgment*}
    The authors are grateful to the software system \texttt{Macaulay2} \cite{M2}, for serving as an excellent source of inspiration.
    The second author is partially supported by the ``Anhui Initiative in Quantum Information Technologies'' (No.~AHY150200) and the ``Fundamental Research Funds for the Central Universities''.
\end{acknowledgment*}

\bibliography{FiberCone}

@book {BV,
    AUTHOR = {Bruns, Winfried and Vetter, Udo},
     TITLE = {Determinantal rings},
    SERIES = {Lecture Notes in Mathematics},
    VOLUME = {1327},
 PUBLISHER = {Springer-Verlag, Berlin},
      YEAR = {1988},
     PAGES = {viii+236},
      ISBN = {3-540-19468-1},
   MRCLASS = {13-02 (13C05 13H10 14M12)},
  MRNUMBER = {953963},
MRREVIEWER = {Piotr Pragacz},
       DOI = {10.1007/BFb0080378},
       URL = {https://doi.org/10.1007/BFb0080378},
}

@article {MR494707,
    AUTHOR = {Goto, Shiro and Watanabe, Keiichi},
     TITLE = {On graded rings. {I}},
   JOURNAL = {J. Math. Soc. Japan},
  FJOURNAL = {Journal of the Mathematical Society of Japan},
    VOLUME = {30},
      YEAR = {1978},
     PAGES = {179--213},
      ISSN = {0025-5645},
   MRCLASS = {13H10 (13D03 14B15)},
  MRNUMBER = {494707},
MRREVIEWER = {Gerald S. Garfinkel},
       DOI = {10.2969/jmsj/03020179},
       URL = {https://doi.org/10.2969/jmsj/03020179},
}

@article {MR676563,
    AUTHOR = {Ooishi, Akira},
     TITLE = {Castelnuovo's regularity of graded rings and modules},
   JOURNAL = {Hiroshima Math. J.},
  FJOURNAL = {Hiroshima Mathematical Journal},
    VOLUME = {12},
      YEAR = {1982},
     PAGES = {627--644},
      ISSN = {0018-2079},
   MRCLASS = {13H10 (14B15 14M05)},
  MRNUMBER = {676563},
MRREVIEWER = {A. Verschoren},
       URL = {http://projecteuclid.org/euclid.hmj/1206133652},
}

@article {Lin-Shen3,
    AUTHOR = {Lin, Kuei-Nuan and Shen, Yi-Huang},
     TITLE = {Blow-up algebras of secant varieties of rational normal scrolls},
     eprint={arXiv:2107.04168},
  year={2021}
}

@article {ALL,
    AUTHOR = {Almousa, Ayah and Lin, Kuei-Nuan and Liske, Whitney},
     TITLE = {Rees Algebras of Determinantal Facet Ideals},
     eprint={arXiv:2008.10950},
  year={2020}
}

@article {CDFGLPS,
    AUTHOR = {Celikbas, Ela and Dufresne, Emilie and Fouli, Louiza and Gorla, Elisa and Lin, Kuei-Nuan and Polini, Claudia and Swanson, Irena},
     TITLE = {Rees algebras of sparse determinantal ideals},
     eprint={arXiv:2101.03222},
  year={2021}
}

@Book{MR2266432,
  Title                    = {Integral closure of ideals, rings, and modules},
  Author                   = {Huneke, Craig and Swanson, Irena},
  Publisher                = {Cambridge University Press},
  Year                     = {2006},

  Address                  = {Cambridge},
  Series                   = {London Mathematical Society Lecture Note Series},
  Volume                   = {336},

  ISBN                     = {978-0-521-68860-4; 0-521-68860-4},
  Mrclass                  = {13Bxx},
  Pages                    = {xiv+431}
}

@Book{MR1322960,
  Title                    = {Commutative algebra, with a view toward algebraic geometry},
  Author                   = {Eisenbud, David},
  Publisher                = {Springer-Verlag},
  Year                     = {1995},

  Address                  = {New York},
  Series                   = {Graduate Texts in Mathematics},
  Volume                   = {150},

  ISBN                     = {0-387-94268-8; 0-387-94269-6},
  Mrclass                  = {13-01 (14A05)},
  Pages                    = {xvi+785}
}

@Article{MR1723128,
  author     = {Polini, Claudia and Ulrich, Bernd},
  title      = {Necessary and sufficient conditions for the {C}ohen–{M}acaulayness of blowup algebras},
  journal    = {Compositio Math.},
  year       = {1999},
  volume     = {119},
  pages      = {185--207},
  issn       = {},
  doi        = {},
  fjournal   = {},
  mrclass    = {13A30 (13H10 14M07)},
  mrnumber   = {1723128},
  mrreviewer = {},
  url        = {},
}

@Article{MR0696134,
  author     = {Eisenbud, David and Huneke, Craig},
  title      = {Cohen-{M}acaulay {R}ees algebras and their specialization},
  journal    = {J. Algebra},
  year       = {1983},
  volume     = {81},
  pages      = {202--224},
  issn       = {},
  doi        = {},
  fjournal   = {},
  mrclass    = {13H10 (13C05 13C15)},
  mrnumber   = {0696134},
  mrreviewer = {},
  url        = {},
}

@Article{MR2431031,
  author     = { Viet, Duong Quoc},
  title      = {On the {C}ohen–{M}acaulayness of fiber cones},
  journal    = {Proc. Amer. Math. Soc.},
  year       = {2008},
  volume     = {136},
  pages      = {4185--4195},
  issn       = {},
  doi        = {},
  fjournal   = {},
  mrclass    = {13H10 (13A30)},
  mrnumber   = {2431031},
  mrreviewer = {},
  url        = {},
}

@Article{MR1128652,
  author     = {Shah, Kishor },
  title      = {On the {C}ohen–{M}acaulayness of the fiber cone of an ideal},
  journal    = {J. Algebra},
  year       = {1991},
  volume     = {143},
  pages      = {156--172},
  issn       = {},
  doi        = {},
  fjournal   = {},
  mrclass    = {13H10 (13C99 13D40 13H15)},
  mrnumber   = {1128652},
  mrreviewer = {},
  url        = {},
}

@Article{MR0563540,
  author     = {Sally, Judith},
  title      = {Tangent cones at {G}orenstein singularities},
  journal    = {Compositio Math. },
  year       = {1980},
  volume     = {2},
  pages      = {167--175},
  issn       = {},
  doi        = {},
  fjournal   = {},
  mrclass    = {14B05},
  mrnumber   = {0563540},
  mrreviewer = {},
  url        = {},
}

@Article{Sagbi,
  author     = {Conca, Aldo and Herzog, J\"{u}rgen and Valla, Giuseppe},
  journal    = {J. Reine Angew. Math.},
  title      = {Sagbi bases with applications to blow-up algebras},
  year       = {1996},
  issn       = {0075-4102},
  pages      = {113--138},
  volume     = {474},
  doi        = {10.1515/crll.1996.474.113},
  fjournal   = {Journal f\"{u}r die Reine und Angewandte Mathematik. [Crelle's Journal]},
  mrclass    = {13P10 (13A30)},
  mrnumber   = {1390693},
  mrreviewer = {Ngo Viet Trung},
  url        = {https://doi.org/10.1515/crll.1996.474.113},
}

@article {MR314833,
    AUTHOR = {Hochster, M.},
     TITLE = {Grassmannians and their {S}chubert subvarieties are
              arithmetically {C}ohen-{M}acaulay},
   JOURNAL = {J. Algebra},
  FJOURNAL = {Journal of Algebra},
    VOLUME = {25},
      YEAR = {1973},
     PAGES = {40--57},
      ISSN = {0021-8693},
   MRCLASS = {13H10 (14M05 14M15 14N10)},
  MRNUMBER = {314833},
MRREVIEWER = {L. J. Ratliff, Jr.},
       DOI = {10.1016/0021-8693(73)90074-4},
       URL = {https://doi.org/10.1016/0021-8693(73)90074-4},
}

@Article{MR4068250,
  author   = {Sammartano, Alessio},
  journal  = {Trans. Amer. Math. Soc.},
  title    = {Blowup algebras of rational normal scrolls},
  year     = {2020},
  issn     = {0002-9947},
  pages    = {797--818},
  volume   = {373},
  doi      = {10.1090/tran/7689},
  fjournal = {Transactions of the American Mathematical Society},
  mrclass  = {13A30 (05E45 13C40 13D02 13P10 14J40 14M12)},
  mrnumber = {4068250},
  url      = {https://doi.org/10.1090/tran/7689},
}

@Article{arXiv:2003.14232,
    AUTHOR = {Seccia, Lisa},
     TITLE = {Knutson ideals and determinantal ideals of {H}ankel matrices},
   JOURNAL = {J. Pure Appl. Algebra},
  FJOURNAL = {Journal of Pure and Applied Algebra},
    VOLUME = {225},
      YEAR = {2021},
     PAGES = {106788, 17 pp},
      ISSN = {0022-4049},
   MRCLASS = {13A35 (13C40 13P10)},
  MRNUMBER = {4260034},
       DOI = {10.1016/j.jpaa.2021.106788},
       URL = {https://doi.org/10.1016/j.jpaa.2021.106788},
}

@Article{arXiv:2005.02909,
  author = {Rainelly Cunha and Maral Mostafazadehfard and Zaqueu Ramos and Aron Simis},
  title  = {{Coordinate sections of generic {H}ankel matrices}},
  year   = {2020},
  eprint = {arXiv:2005.02909},
}

@Book{MR1251956,
  author    = {Bruns, Winfried and Herzog, J{\"u}rgen},
  publisher = {Cambridge University Press},
  title     = {Cohen-{M}acaulay rings},
  year      = {1998},
  address   = {Cambridge},
  isbn      = {0-521-56674-6/pbk},
  series    = {Cambridge Studies in Advanced Mathematics},
  volume    = {39},
  language  = {English},
  msc2010   = {13C14 13H10 13D03 13-02 13D25},
  pages     = {xiv + 453},
  zbl       = {0909.13005},
}

@Book{MR2724673,
  author    = {Herzog, J{\"u}rgen and Hibi, Takayuki},
  publisher = {Springer-Verlag London Ltd.},
  title     = {Monomial ideals},
  year      = {2011},
  address   = {London},
  isbn      = {978-0-85729-105-9},
  series    = {Graduate Texts in Mathematics},
  volume    = {260},
  mrclass   = {13D02 (13D40 13F55 13P10)},
  mrnumber  = {2724673},
  pages     = {xvi+305},
}

@Book{MR2508056,
  author    = {Bruns, Winfried and Gubeladze, Joseph},
  publisher = {Springer},
  title     = {Polytopes, rings, and {K}-theory},
  year      = {2009},
  address   = {Dordrecht},
  isbn      = {978-0-387-76355-2},
  series    = {Springer Monographs in Mathematics},
  mrclass   = {19-02 (11H06 13F45 14M25 52-01 52B20)},
  mrnumber  = {2508056 (2010d:19001)},
  pages     = {xiv+461},
}

@Article{MR3341465,
  author     = {Bruns, Winfried and Conca, Aldo and Varbaro, Matteo},
  journal    = {J. Reine Angew. Math.},
  title      = {Maximal minors and linear powers},
  year       = {2015},
  issn       = {0075-4102},
  pages      = {41--53},
  volume     = {702},
  doi        = {10.1515/crelle-2013-0026},
  fjournal   = {Journal f\"{u}r die Reine und Angewandte Mathematik. [Crelle's Journal]},
  mrclass    = {13D02 (13A30 13C40)},
  mrnumber   = {3341465},
  mrreviewer = {Jason Gordon McCullough},
  url        = {https://doi.org/10.1515/crelle-2013-0026},
}

@article{arXiv1805.11923,
    AUTHOR = {Conca, Aldo and Varbaro, Matteo},
     TITLE = {Square-free {G}r\"{o}bner degenerations},
   JOURNAL = {Invent. Math.},
  FJOURNAL = {Inventiones Mathematicae},
    VOLUME = {221},
      YEAR = {2020},
     PAGES = {713--730},
      ISSN = {0020-9910},
   MRCLASS = {13P10},
  MRNUMBER = {4132955},
       DOI = {10.1007/s00222-020-00958-7},
       URL = {https://doi.org/10.1007/s00222-020-00958-7},
}

@Article{MR3864202,
  author   = {Garrousian, Mehdi and Simis, Aron and Toh\u{a}neanu, \c{S}tefan O.},
  title    = {A blowup algebra for hyperplane arrangements},
  journal  = {Algebra Number Theory},
  year     = {2018},
  volume   = {12},
  pages    = {1401--1429},
  issn     = {1937-0652},
  doi      = {10.2140/ant.2018.12.1401},
  fjournal = {Algebra \& Number Theory},
  mrclass  = {13A30 (13C14 13D02 13D05 14N20)},
  mrnumber = {3864202},
  url      = {https://doi.org/10.2140/ant.2018.12.1401},
}

@Article{CNPY,
  Title                    = {Blow-up algebras, determinantal ideals, and {D}edekind--{M}ertens-like formulas},
  Author                   = {Corso, Alberto and Nagel, Uwe and Petrovi\'c, Sonja and Yuen, Cornelia},
  Journal                  = {Forum Math.},
  Year                     = {2017},
  Pages                    = {799--830},
  Volume                   = {29},

  Doi                      = {10.1515/forum-2016-0007},
  Fjournal                 = {Forum Mathematicum},
  ISSN                     = {0933-7741},
  Mrclass                  = {13C40 (05E45 13F55 13P10 14M12)},
  Mrnumber                 = {3669004},
  Url                      = {http://dx.doi.org/10.1515/forum-2016-0007}
}

@Book{MR1416564,
  author    = {Harris, Joe},
  publisher = {Springer-Verlag, New York},
  title     = {Algebraic geometry},
  year      = {1995},
  isbn      = {0-387-97716-3},
  series    = {Graduate Texts in Mathematics},
  volume    = {133},
  mrclass   = {14-01},
  mrnumber  = {1416564},
  pages     = {xx+328},
}

@Misc{M2,
  Title                    = {Macaulay2, a software system for research in algebraic geometry},

  Author                   = {Grayson, Daniel R. and Stillman, Michael E.},
  HowPublished             = {Available at \href{http://www.math.uiuc.edu/Macaulay2/}% {http://www.math.uiuc.edu/Macaulay2/}},
  Note                     = {Available at \href{http://www.math.uiuc.edu/Macaulay2/}% {http://www.math.uiuc.edu/Macaulay2/}},

  Eprint                   = {\href{http://www.math.uiuc.edu/Macaulay2/}% {http://www.math.uiuc.edu/Macaulay2/}}
}

@Article{MR3836659,
  author     = {Conca, Aldo and Mostafazadehfard, Maral and Singh, Anurag K. and Varbaro, Matteo},
  journal    = {Adv. Math.},
  title      = {Hankel determinantal rings have rational singularities},
  year       = {2018},
  issn       = {0001-8708},
  pages      = {111--129},
  volume     = {335},
  doi        = {10.1016/j.aim.2018.06.011},
  fjournal   = {Advances in Mathematics},
  mrclass    = {13A35 (13C20 13C40)},
  mrnumber   = {3836659},
  mrreviewer = {Geoffrey D. Dietz},
  url        = {https://doi.org/10.1016/j.aim.2018.06.011},
}

\end{document}